\documentclass[11pt,letterpaper,reqno]{amsart}
\usepackage[margin=1in]{geometry}

\usepackage[usenames,dvipsnames]{xcolor}
\usepackage{fancyhdr}
\usepackage{amsmath,amsfonts,amsbsy,amsgen,amscd,amssymb,amsthm}
\usepackage{url}

\usepackage{mathtools}

\mathtoolsset{showonlyrefs}

\usepackage[font=small,margin=0.25in,labelfont={bf},labelsep={colon}]{caption}

\usepackage{subcaption}

\usepackage{tikz}
\usepackage{microtype}
\usepackage{enumitem}

\definecolor{dark-gray}{gray}{0.3}
\definecolor{dkgray}{rgb}{.4,.4,.4}
\definecolor{dkblue}{rgb}{0,0,.5}
\definecolor{medblue}{rgb}{0,0,.75}
\definecolor{rust}{rgb}{0.5,0.1,0.1}

\usepackage[colorlinks=true]{hyperref}

\hypersetup{urlcolor=rust}
\hypersetup{citecolor=dkblue}
\hypersetup{linkcolor=dkblue}

\usepackage{setspace}

\usepackage{graphicx}
\usepackage{booktabs,longtable,tabu} 
\usepackage{multirow} 

\usepackage{float}

\usepackage[full]{textcomp}

\usepackage[scaled=.98,sups,osf]{XCharter}
\usepackage[scaled=1.04,varqu,varl]{inconsolata}
\usepackage[type1]{cabin}
\usepackage[charter,vvarbb,scaled=1.07]{newtxmath}
\usepackage[cal=boondoxo]{mathalfa}
\usepackage[T1]{fontenc}

\usepackage{bm} 

\graphicspath{{figures/}}

\newtheorem{theorem}{Theorem}[section]
\newtheorem{lemma}[theorem]{Lemma}

\newtheorem{proposition}[theorem]{Proposition}

\theoremstyle{definition}

\newtheorem{remark}[theorem]{Remark}

\numberwithin{equation}{section} 
\numberwithin{figure}{section}
\numberwithin{table}{section}

\floatstyle{plaintop}
\newfloat{recipe}{thp}{lor}
\floatname{recipe}{Recipe}
\numberwithin{recipe}{section}

\renewcommand{\phi}{\varphi}

\providecommand{\mathbbm}{\mathbb} 

\newcommand{\R}{\mathbbm{R}}

\newcommand{\diff}{\mathrm{d}}
\newcommand{\idiff}{\, \diff}

\newcommand{\triplenorm}[1]{{\left\vert\kern-0.25ex\left\vert\kern-0.25ex\left\vert #1
    \right\vert\kern-0.25ex\right\vert\kern-0.25ex\right\vert}}

\newcommand{\ind}{\mathbbm{1}} 

\newcommand{\calM}{\mathcal{M}}

\DeclareMathOperator{\curl}{curl}

\DeclareMathOperator{\supp}{supp}

\DeclareMathOperator*{\esssup}{ess\,sup}

\def\XXint#1#2#3{{%
\setbox0=\hbox{$#1{#2#3}{\int}$}
\vcenter{\hbox{$#2#3$}}\kern-.5\wd0}}

\renewcommand{\leq}{\leqslant}
\renewcommand{\geq}{\geqslant}
\renewcommand{\subset}{\subseteq}

\newcommand{\RR}{\mathbb{R}}

\pdftrailerid{}
\newcommand{\Mnorm}[2]{\lVert #1\rVert_{\mathcal M^{#2}}}
\newcommand{\weaknorm}[2]{\lVert #1\rVert_{L^{#2,\infty}}}
\newcommand{\qw}{q_\omega}
\newcommand{\qu}{q_u}
\newcommand{\qwp}{q_\omega'}
\newcommand{\Idmap}{\operatorname{Id}}
\newcommand{\Riesz}{\mathcal R}
\newcommand{\loc}{\mathrm{loc}}
\title{Self-similar solutions
       of the $2$D incompressible Euler equations\\ via A Lagrangian construction}

\author{Thomas Y. Hou}
\address{Applied and Computational Mathematics,
California Institute of Technology, Pasadena, CA, USA}
\email{hou@cms.caltech.edu}

\author{Xiang Qin}
\address{Applied and Computational Mathematics,
California Institute of Technology, Pasadena, CA, USA}
\email{xqin2@caltech.edu}

\author{Peicong Song}
\address{Applied and Computational Mathematics,
California Institute of Technology, Pasadena, CA, USA}
\email{psong2@caltech.edu}

\author{Wenhan Zhang}
\address{Applied and Computational Mathematics,
California Institute of Technology, Pasadena, CA, USA}
\email{wzhang9@caltech.edu}

\date{}
\begin{document}
\maketitle
\begin{abstract}
We give a Lagrangian construction of forward self-similar solutions
of the $2$D incompressible Euler equation. For every $1<\alpha<2$
and every real $h\in L^{2/\alpha}(\mathbb S^1)$, the construction
produces a flow with initial vorticity $|x|^{-\alpha}h(\arg x)$,
without smallness or sign assumptions. The argument is based on
a fixed-point formulation of backward particle transport from
time $t=1$ to $t=0$. Mollifying the Biot--Savart velocity in
self-similar variables preserves exact vorticity transport,
while symmetrization yields the unforced Euler equation in
velocity form even when the ordinary vorticity flux is not
locally integrable. The limiting solution admits an
almost-everywhere invertible, measure-preserving regular
Lagrangian flow from time zero, with strong local continuity
and preservation of signed value distributions and absolutely
integrable Casimirs.
\end{abstract}

\section{Introduction and main result}\label{sec:introduction}
We consider the two-dimensional incompressible Euler equations
on the whole plane in velocity-pressure form,
\begin{equation}\label{eq:Euler}
 \partial_t u+\operatorname{div}(u\otimes u)+\nabla P=0,
 \qquad
 \operatorname{div}u=0.
 \tag{2D Euler-vel}
\end{equation}
For smooth solutions, the scalar vorticity
$\omega=\curl u$ satisfies the transport equation
\begin{equation}\label{eq:vorticitytransport}
 \partial_t\omega+u\cdot\nabla\omega=0.
 \tag{2D Euler-vor}
\end{equation}
In this work, we give a Lagrangian construction of forward self-similar Euler flows, 
\begin{equation}\label{eq:selfsimilar_beginning}
 \omega(t,x)=t^{-1}\Omega(t^{-1/\alpha}x),\qquad
 u(t,x)=t^{1/\alpha-1}U(t^{-1/\alpha}x),\qquad t>0,\qquad 1<\alpha<2,
\end{equation}
starting from prescribed
homogeneous vorticity with rough angular dependence. Our construction uses the Lagrangian transport structure while
establishing the Euler equation through its velocity formulation,
which remains meaningful even when the ordinary vorticity flux
is not locally integrable.

The construction of self-similar solutions has a substantial
history in incompressible fluid equations. For two-dimensional
Euler, an important approach is to construct solutions with
spiraling vorticity structures. Elling initiated the construction
of algebraic spiral solutions, including flows with mixed-sign
vorticity \cite{ELLING2013JDE,EllingCMP2016}.
Shao--Wei--Zhang \cite{ShaoWeiZhang2025} extended this theory
to every rotational symmetry order at least two, while Choi
\cite{choi2025asymmetricselfsimilarspiralsolutions} removed
rotational symmetry. These constructions impose perturbative
or angular dominance conditions relative to a radial power-law
vortex. Homogeneous steady configurations were studied by
Choi--Coiculescu \cite{ChoiCoiculescu2026}, and related spirals
for generalized SQG were constructed by
Garc\'ia--G\'omez-Serrano \cite{GarciaGomezSerrano2026}.
More directly related to the present work, Choi \cite{Choi2026}
recently established self-similar Euler existence for arbitrary
homogeneous data with a $C^1$ angular velocity, without smallness
or sign assumptions. We compare the two results below.

The existence of self-similar solutions for large initial data
has also been studied extensively for Navier--Stokes equations.
For three-dimensional Navier--Stokes, early perturbative
constructions \cite{Giga1989,Cannone1996,KochTataru2001}
preceded the large-data existence theorem of
Jia--\v{S}ver\'ak \cite{JiaSverak_Invention}.
Subsequent work treated the half-space and discrete
self-similarity \cite{Tsai_halfspace_AnalPDE16,Tsai_discrete_AnnHP17},
as well as fractional dissipation
\cite{LAI2019981,Lai_TAMS21,LAI2024JFA}.
Recent two-dimensional constructions include the
Navier--Stokes results of Albritton--Guillod--Korobkov--Ren
and Gui--Liu--Xie
\cite{albritton2026forwardselfsimilarsolutions2d,gui2026forwardselfsimilarsolutionstwodimensional},
and the hypodissipative theory of Hou--Song and Choi \cite{HouSong2026,Choi2026}.

The connection with nonuniqueness is a principal motivation.
Yudovich-type uniqueness results
\cite{Yudovich1963,Yudovich1995}
contrast with the broader weak existence theory
\cite{DiPernaMajda1987,Delort1991,Schochet1995}.
For the unforced Euler equation on the two-dimensional
torus, Bru\`e--Colombo--Kumar \cite{BrueColomboKumar2026}
established nonuniqueness with vorticity bounded in
$L^p$ for some $p>1$, using convex integration
\cite{DeLellisSzekelyhidi2009}.
With forcing, nonuniqueness through self-similar
instability is known for every finite $p>1$
\cite{vishik2018I,vishik2018II,Instability_after_Vishik}.
Related developments include vanishing-viscosity
limits \cite{albritton2025vanishingviscositynonuniquesolutions}
and numerical evidence for distinct unforced
self-similar Euler evolutions from the same
homogeneous datum \cite{BressanShen2021}.
In three-dimensional Navier--Stokes,
self-similar bifurcation and instability have
also been studied in
\cite{JiaSverak2015JFA,Guillod2023,abc_forcing,
hypoNS_force23,hou2025nonuniquenesslerayhopfsolutionsunforced}.

Regularizing the Biot--Savart law provides a transport-based
approximation of Euler that is closely related to vortex blob
methods and the Euler-$\alpha$ or averaged Euler equations.
In these approximations, a vorticity variable is transported by
a regularized velocity rather than smoothed by diffusion.
DiPerna--Majda \cite{DiPernaMajda1987} studied concentration and
compactness in Euler regularizations, and Liu--Xin
\cite{LiuXin1995} proved convergence of vortex blob methods for
one-signed vortex-sheet data. Oliver--Shkoller
\cite{OliverShkoller2001} identified a particular vortex blob
regularization with averaged Euler and proved convergence for
bounded initial vorticity. Bardos--Linshiz--Titi
\cite{BardosLinshizTiti2010} established convergence of
Euler-$\alpha$ solutions for finite one-signed vorticity measures
with locally finite energy. Lagrangian properties of vortex approximations were also studied
by Ciampa--Crippa--Spirito \cite{CiampaCrippaSpirito2020}, who
proved that the weak Euler solutions obtained by their vortex
method are Lagrangian. In the vortex blob and averaged
Euler approximations just described, the smoothing length is
fixed in physical variables. Here we instead regularize in
self-similar variables, so that the physical smoothing length
follows the self-similar scaling. This preserves particle
transport while making the regularization compatible with
the profile fixed-point problem.

Our construction must resolve two related problems.
First, the vorticity profile must be consistent with the
Lagrangian transport generated by its own Biot--Savart velocity.
In particular, transporting the prescribed initial vorticity
forward to time one must recover the same profile.
Second, the resulting velocity must satisfy the Euler velocity
equation \eqref{eq:Euler}. The second assertion does not follow from a formal vorticity equation
when the velocity--vorticity product is not locally integrable.
For nonzero homogeneous data the total absolute vorticity is infinite,
so the construction must also control particles and nonlinear interactions
at arbitrarily large distances.

We identify $\mathbb S^1$ with $\R/(2\pi\mathbb Z)$. Fix a homogeneity exponent and its two critical exponents by
\begin{equation}\label{eq:exponents}
 1<\alpha<2,\qquad \qw=\frac2\alpha,\qquad
 \qu=\frac2{\alpha-1}=\frac{2\qw}{2-\qw}.
\end{equation}
We prescribe a real angular function $h\in L^{\qw}(\mathbb S^1)$ and set
\begin{equation}\label{eq:datum}
 H(x)=\rho^{-\alpha}h(\theta),\qquad
 x=\rho(\cos\theta,\sin\theta),\quad \rho>0.
\end{equation}
Its value at the origin is immaterial. We fix the counterclockwise
rotation and the corresponding rotated gradient by
\begin{equation}\label{eq:rotation}
 x^\perp=(-x_2,x_1),\qquad
 \nabla^\perp=(-\partial_2,\partial_1).
\end{equation}
The superscript $\perp$ always denotes this specific vector rotation.
Our vorticity and Biot--Savart conventions are
\begin{equation}\label{eq:kernel}
 \curl u=\partial_1u_2-\partial_2u_1,\qquad
 K(x)=\frac{x^\perp}{2\pi|x|^2},\quad x\ne0.
\end{equation}
For $1<\ell<\infty$, write $\ell'=\ell/(\ell-1)$ and use the Banach
Marcinkiewicz norm
\begin{equation}\label{eq:marcinkiewicz}
 \Mnorm{f}{\ell}
 =\sup_{0<|E|<\infty}|E|^{-1/\ell'}\int_E|f|\idiff x.
\end{equation}
Here\phantomsection\label{def:sets}
 $E$ ranges over measurable sets, $|E|$ is their measure, and the
modulus is Euclidean for vectors or matrices.
This\phantomsection\label{def:weaknorm}
norm actually defines the same space as the Lorentz space $L^{\ell,\infty}$ with the quasi-norm
$\|f\|_{L^{\ell,\infty}} := \sup_{\lambda>0}\lambda|\{|f|>\lambda\}|^{1/\ell}$. In fact, one has
\[
    \|f\|_{L^{\ell,\infty}}\leq \|f\|_{\calM^\ell}\leq \ell'\|f\|_{L^{\ell,\infty}}.
\]
In general, we recall the definition of Lorentz spaces $L^{p,q}$ on $\RR^2$, where $(p,q)\in [1,+\infty)\times[1,+\infty]$, characterized by the (quasi-)norms:
\[
\|f\|_{L^{p,q}} :=
\begin{cases}
\displaystyle
\ p^{\frac{1}{q}}
\left\|t|\{|f|>t\}|^{\frac{1}{p}}\right\|_{L^q(\RR^+,t^{-1}\idiff t)}
= p^{\frac{1}{q}}
\left(\int_0^{+\infty}t^{q-1}|\{|f|>t\}|^{\frac{q}{p}}
\idiff t\right)^{\frac{1}{q}},
& q<\infty,\\[1.2em]
\displaystyle
\ \left\|t|\{|f|>t\}|^{\frac{1}{p}}\right\|_{L^\infty(\RR^+)}
= \sup_{t>0}\,t|\{|f|>t\}|^{\frac{1}{p}},
& q=\infty.
\end{cases}
\]
By convention, $L^{\infty,\infty} = L^\infty$. We also recall some important properties of Lorentz spaces, which will be applied repeatedly below:
\begin{enumerate}
    \item (Lorentz H\"older inequalities) For $1\leq p,p_1,p_2<+\infty$, $1\leq q,q_1,q_2\leq+\infty$, $\frac{1}{p} = \frac{1}{p_1}+\frac{1}{p_2}$, and $\frac{1}{q} = \frac{1}{q_1}+\frac{1}{q_2}$, it holds that
    \begin{equation}
        \|fg\|_{L^{p,q}}\leq C_{p_1,p_2,q_1,q_2}\|f\|_{L^{p_1,q_1}}\|g\|_{L^{p_2,q_2}}.
    \end{equation}
    In particular, we will repeatedly use the pairing estimate
\begin{equation}\label{eq:lorentzholder}
 \int_{\R^2}|fg|\idiff x
 \leq C_\ell\Mnorm{f}{\ell}
 \|g\|_{L^{\ell',1}},
 \qquad 1<\ell<\infty.
\end{equation}
    \item (Embeddings) For a fixed $p$ and $1\leq q_1\leq q_2\leq +\infty$, we have
    \[
        L^{p,q_1}\subset L^{p,q_2}.
    \]
    Besides, for any $p\geq 1$, it holds that $\|f\|_{L^{p,p}} = \|f\|_{L^p}$.
    \item (Dualities) For $1<\ell<\infty$, the space $\calM^\ell$, with an equivalent norm, is the continuous dual of $L^{\ell',1}$, under the pairing
    \[
       \langle f,\varphi\rangle = \int_{\RR^2} f\varphi\idiff x.
    \]
    The space $L^{\ell',1}$ is separable, and bounded compactly supported functions are dense in it.
\end{enumerate}
The following standard Lorentz-space facts will also be used:
Convolution with kernel $|x|^{-1}$ maps
$L^{\ell,\infty}(\R^2)$ to $L^{2\ell/(2-\ell),\infty}$ for
$1<\ell<2$. Calder\'on--Zygmund operators and the Hardy--Littlewood
maximal operator are bounded on $L^{\ell,\infty}$ for $1<\ell<\infty$.
These statements follow from the usual strong estimates by real
interpolation. We refer interested readers to \cite{BennettSharpley1988,ONeil1963,Stein1970} for details. We will also repeatedly use the rearrangement consequence
\begin{equation}\label{eq:uniformintegrability}
 \int_E|f|^p\idiff x
 \leq C_{p,\ell}\Mnorm{f}{\ell}^{p}|E|^{1-p/\ell},
 \qquad 1\leq p<\ell,
\end{equation}
which can be derived from the Lorentz H\"older.
In the following, we set $M_H=\Mnorm{H}{\qw}$. 

We use the following differential and tensor conventions.
The vector Jacobian\phantomsection\label{def:jacobian}
has components $(Du)_{ij}=\partial_j u_i$.
The tensor product\phantomsection\label{def:tensorconventions}
has components $(u\otimes u)_{ij}=u_i u_j$.
Componentwise contraction and row divergence are given by
\[
 (u\otimes u):D\boldsymbol\zeta
 =\sum_{i,j=1}^2u_i u_j\partial_j\zeta_i,
 \qquad
 (\operatorname{div}(u\otimes u))_i
 =\sum_{j=1}^2\partial_j(u_i u_j),
\]
where $i,j\in\{1,2\}$ are Cartesian indices.
For the pressure, the Riesz transform
\phantomsection\label{def:riesz}
$\Riesz_i$ has Fourier multiplier
$-\mathrm{i}\xi_i/|\xi|$, where $\xi\in\R^2$
is the frequency variable and $\mathrm{i}$
is the imaginary unit.

\begin{theorem}
For\phantomsection\label{thm:main} every datum \eqref{eq:datum}, there is a real profile
$\Omega\in\mathcal M^{\qw}(\R^2)$ such that, with $U=K*\Omega$, the fields
\begin{equation}\label{eq:selfsimilar}
 \omega(t,x)=t^{-1}\Omega(t^{-1/\alpha}x),\qquad
 u(t,x)=t^{1/\alpha-1}U(t^{-1/\alpha}x),\qquad t>0,
\end{equation}
have the following properties.
\begin{enumerate}[label=\textup{(\roman*)},leftmargin=*]
\item The velocity $u(t)=K*\omega(t)$ is canonical,
and satisfies the critical bounds
\begin{equation}\label{eq:mainbounds}
 \Mnorm{\omega(t)}{\qw}=M_H,\qquad
 \Mnorm{u(t)}{\qu}+\Mnorm{Du(t)}{\qw}\leq C_\alpha M_H.
\end{equation}
The constant depends only on $\alpha$. The canonical pressure is defined by
\begin{equation}\label{eq:pressure}
 P(t)=\sum_{i,j=1}^2\Riesz_i\Riesz_j(u_i(t)u_j(t)),
 \qquad
 \Mnorm{P(t)}{\qu/2}\leq C_\alpha M_H^2.
\end{equation}
With this choice, $(u,P)$ satisfies the unforced
Euler momentum equation on the whole plane.
Precisely, for every vector test\phantomsection\label{def:testconventions}
$\boldsymbol\zeta\in C_c^\infty([0,\infty)\times\R^2;\R^2)$,
\begin{equation}\label{eq:velocity}
 \int_0^\infty\!\int_{\R^2}
 \bigl(u\cdot\partial_t\boldsymbol\zeta
 +(u\otimes u):D\boldsymbol\zeta
 +P\operatorname{div}\boldsymbol\zeta\bigr)\idiff x\idiff t
 +\int_{\R^2}(K*H)\cdot\boldsymbol\zeta(0)\idiff x=0.
\end{equation}
Also $\operatorname{div}u=0$ and $\curl u=\omega$ in distributions.
\item On every finite interval $[0,T]$, $u$ has a unique
measure-preserving regular Lagrangian flow for this prescribed velocity.
In two-time notation, $X_u(t,s,x)$ is the position at time $t$ of the
particle at $x$ at time $s$. Its trajectories are absolutely continuous
on the closed interval, its maps have almost-everywhere inverses
$X_u(s,t,\cdot)$, and
\begin{equation}\label{eq:exacttransport}
 \omega(t,x)=H(X_u(0,t,x)),\qquad \omega(0)=H.
\end{equation}
These identities hold almost everywhere in space at each fixed time.
\item For every $1\leq p<\qw$,
\begin{equation}\label{eq:traces}
 \omega\in C([0,\infty);L^p_{\loc}(\R^2)),\qquad
 u\in C([0,\infty);L^2_{\loc}(\R^2)),\qquad u(0)=K*H.
\end{equation}
Every\phantomsection\label{def:renormalization}
 bounded $\beta\in C^1(\R)$ with $\beta(0)=0$ and compactly
supported $\beta'$ satisfies
\begin{equation}\label{eq:renormalization}
 \int_0^\infty\!\int_{\R^2}\beta(\omega)
       (\partial_t\zeta+u\cdot\nabla\zeta)\idiff x\idiff t
 +\int_{\R^2}\beta(H)\zeta(0)\idiff x=0
\end{equation}
for every scalar\phantomsection\label{def:scalartests} $\zeta\in C_c^\infty([0,\infty)\times\R^2)$.
\item Let\phantomsection\label{def:angularparts}
 $h_+=\max(h,0)$ and $h_-=\max(-h,0)$. For $\lambda>0$ and
$t\geq0$ the signed distributions are exactly
\begin{equation}\label{eq:signeddistribution}
 |\{\omega(t)>\lambda\}|=D_H^+\lambda^{-\qw},\qquad
 |\{\omega(t)<-\lambda\}|=D_H^-\lambda^{-\qw},\qquad
 D_H^\pm=\frac12\int_0^{2\pi}h_\pm^{\qw}\idiff\theta.
\end{equation}
For\phantomsection\label{def:casimir}
 every Borel function $\mathcal F:\R\to\R$ such that $\mathcal F(H)\in L^1(\R^2)$,
one has $\mathcal F(\omega(t))\in L^1$ and
$\int \mathcal F(\omega(t))\idiff x=\int \mathcal F(H)\idiff x$.
\end{enumerate}
The same profile defines the solution for all positive times.
\end{theorem}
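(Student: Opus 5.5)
The construction is a fixed point for the vorticity profile at time one, built from backward particle transport from $t=1$ to $t=0$. I will first regularize, then pass to the limit.

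\textbf{Regularized fixed point.} Fix $\epsilon>0$ and a mollifier $\phi_\epsilon$. Given a candidate profile $\Omega$ in the closed convex set
\[
\mathcal B=\{\Omega:\Mnorm{\Omega}{\qw}\leq M_H\},
\]
equipped with the weak-$*$ topology of $\mathcal M^{\qw}=(L^{\qw',1})^*$ (metrizable on bounded sets by separability), I do the following.
\begin{enumerate}
\item Set $U_\epsilon=\phi_\epsilon*(K*\Omega)$ in self-similar variables.
\item Define the physical velocity $u_\epsilon(t,x)=t^{1/\alpha-1}U_\epsilon(t^{-1/\alpha}x)$. Its physical smoothing length is $\epsilon t^{1/\alpha}$.
\item Solve the ODE backward from $t=1$ to $t=0$. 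Since $\nabla U_\epsilon$ is bounded and $\alpha>1$, the time singularity $t^{1/\alpha-1}$ is integrable and $X(0,1,\cdot)$ is well defined. It is a measure-preserving diffeomorphism because $\operatorname{div}u_\epsilon=0$.
\item Set $T_\epsilon\Omega=H\circ X(0,1,\cdot)$.
\end{enumerate}

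Measure preservation gives equimeasurability with $H$. Hence $\Mnorm{T_\epsilon\Omega}{\qw}=M_H$, and the signed distribution functions are exactly those in \eqref{eq:signeddistribution}, computed in polar coordinates from \eqref{eq:datum}.

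Continuity of $T_\epsilon$ in the weak-$*$ topology follows in three steps. Weak-$*$ convergence of $\Omega$ gives locally uniform convergence of $U_\epsilon$, using the decay bound $\Mnorm{U}{\qu}\lesssim M_H$. The flow maps then converge locally uniformly. Composition with a fixed $H\in L^p_{\loc}$ is continuous under measure-preserving maps. Far-field particles need care, and the tail is handled by the uniform integrability estimate \eqref{eq:uniformintegrability}. Schauder--Tychonoff then yields a fixed point $\Omega_\epsilon$.

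For this fixed point, $\omega_\epsilon(t)=t^{-1}\Omega_\epsilon(t^{-1/\alpha}\cdot)$ is exactly transported by $u_\epsilon$. The reason is that self-similar scaling maps the time-$1$ identity to every $t$; this is the point of regularizing in self-similar variables.

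\textbf{Limit $\epsilon\to0$.} The bounds are uniform: $\Mnorm{\Omega_\epsilon}{\qw}=M_H$, and by the fractional integration and Calder\'on--Zygmund facts, $\Mnorm{U_\epsilon}{\qu}+\Mnorm{DU_\epsilon}{\qw}\leq C_\alpha M_H$. Extract a weak-$*$ limit $\Omega$. Then $U_\epsilon\to U=K*\Omega$ strongly in $L^2_{\loc}$, by Rellich applied with the uniform gradient bound in $\mathcal M^{\qw}\subset L^1_{\loc}$. This gives (i)'s bounds, and the pressure bound follows from boundedness of $\Riesz_i\Riesz_j$ on $\mathcal M^{\qu/2}$.

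To obtain \eqref{eq:velocity}, I avoid the flux $u\omega$ entirely and work in velocity form for $u_\epsilon$. Exact transport $\partial_t\omega_\epsilon+u_\epsilon\cdot\nabla\omega_\epsilon=0$ with $\omega_\epsilon=\curl(K*\omega_\epsilon)$ gives an equation for $v_\epsilon=K*\omega_\epsilon$ with the mollified nonlinearity $\curl^{-1}\operatorname{div}(u_\epsilon\omega_\epsilon)$. I rewrite this commutator-wise, symmetrizing the bilinear Biot--Savart interaction so that it becomes
\[
\operatorname{div}(v_\epsilon\otimes v_\epsilon)+\nabla P_\epsilon+R_\epsilon,
\]
where $R_\epsilon$ is a commutator between the mollifier and the quadratic form. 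The target is $R_\epsilon\to0$ in distributions, using strong $L^2_{\loc}$ convergence and $u_\epsilon-v_\epsilon\to0$. Here $u\otimes u\in\mathcal M^{\qu/2}\subset L^1_{\loc}$ since $\qu>2$.

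The initial term comes from $u(t)\to K*H$ in $L^2_{\loc}$ as $t\to0$. By scaling this is the statement that $U(x)\sim|x|^{1-\alpha}$ has the profile of $K*H$ at infinity. I will derive it from transport: particles starting far out move only $O(t^{1/\alpha})$ relative to their radius.

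\textbf{Lagrangian structure, (ii)--(iv), and the main obstacle.} With $Du\in L^1_{\loc}$ away from $t=0$, and the weighted integrability near $t=0$ obtained from scaling, Ambrosio--DiPerna--Lions theory gives a unique measure-preserving regular Lagrangian flow with a.e.\ inverses. Stability of regular Lagrangian flows under $u_\epsilon\to u$ in $L^1_{\loc}$ passes $\omega_\epsilon=H\circ X_\epsilon$ to \eqref{eq:exacttransport}. Then (iii)--(iv) follow:
\begin{itemize}
\item renormalization holds because $\beta(\omega)=\beta(H)\circ X_u$, and $\beta(\omega)$ is bounded;
\item continuity in $L^p_{\loc}$ follows from continuity of flows in measure together with \eqref{eq:uniformintegrability};
\item the Casimirs and \eqref{eq:signeddistribution} follow from measure preservation.
\end{itemize}

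I expect the hardest step to be the velocity equation in the limit, namely the vanishing of the symmetrized commutator $R_\epsilon$ together with control of far-field interactions. This is where total vorticity is infinite and $u\omega\notin L^1_{\loc}$. My first attempt will be to write $R_\epsilon$ as a double integral against $\nabla\zeta$ of a kernel that vanishes to first order in $\epsilon$ on the diagonal. I will then bound it by $\epsilon$ times maximal-function norms in $\mathcal M^{\qw}\times\mathcal M^{\qw}$.
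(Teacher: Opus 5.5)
Your architecture (a regularized fixed point in self-similar variables, Schauder--Tychonoff on the weak-$*$ critical ball, equimeasurability from measure preservation, flow stability to remove $\epsilon$, symmetrization of the flux) is the paper's. You convert to velocity form before the limit and the paper after; either order works once the points below are supplied. The gap is the step you yourself flag as the crux. It is left open, and the estimate you sketch would not close. The commutator kernel $(K_\delta-K)(x-y)\cdot(\nabla\zeta(x)-\nabla\zeta(y))$ does not vanish to first order in $\epsilon$; it has size $\|D^2\zeta\|_\infty$ on a $\delta$-neighbourhood of the diagonal. Bounding $\int_{B_\delta(x)}|w|$ by $\pi\delta^2\mathsf Mw(x)$ leads to $\int_{B_R}|w|\,\mathsf Mw$, which is already infinite for $w=|x|^{-\alpha}$ since $|w|\,\mathsf Mw\gtrsim|x|^{-2\alpha}$. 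Indeed, a product of two $\mathcal M^{\qw}$ functions lies only in $L^{\qw/2,\infty}$, and $\qw/2<1$.

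The missing idea is to take $\eta$ radial, so that $K_\delta(z)=K(z)\int_{|y|<|z|}\eta_\delta(y)\idiff y$. Then $K_\delta$ is odd, which you need even to symmetrize $\int\omega_\epsilon u_\epsilon\cdot\nabla\zeta$. Also $K_\delta=K$ outside $B_\delta$ and $|z|\,|K_\delta(z)|\leq 1/(2\pi)$, so every symmetrized kernel is bounded by $\|D^2\zeta\|_\infty/(2\pi)$ uniformly in $\delta$. For $\supp\nabla\zeta\subset B_R$ and $\delta\leq1$, the commutator for $w=\omega_\epsilon(t)$ is then at most $C\|D^2\zeta\|_\infty M_H\delta^{2-\alpha}\int_{B_{R+1}}|w|$. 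This uses $\int_{B_\delta(x)}|w|\leq CM_H\delta^{2-\alpha}$, which is the Marcinkiewicz norm itself, not a maximal-function bound.

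Two further ingredients are used without proof. First, turning the unregularized symmetrized form into $\operatorname{div}(v_\epsilon\otimes v_\epsilon)$ is the identity $\mathcal B_w(\zeta)=-\int(v\otimes v):D(\nabla^\perp\zeta)$ for rough $w\in\mathcal M^{\qw}$ with $v=K*w$, where $wv$ need not be locally integrable. It needs its own approximation argument: truncate and mollify $w$, pass the bilinear form using the bounded kernel and the tail bound \eqref{eq:tail}, and pass the stress by local $L^2$ velocity compactness. Second, the vorticity equation gives the velocity equation only against divergence-free tests $\nabla^\perp\zeta$. To reach \eqref{eq:velocity} with the canonical Riesz pressure for all vector tests, you must show that the residual $\partial_tu+\operatorname{div}(u\otimes u)+\nabla P$ vanishes. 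This residual is curl- and divergence-free, hence harmonic after time averaging, and the paper kills it by mean-value averaging and the weak-norm decay \eqref{eq:harmonicdecay}. Writing ``$+\nabla P_\epsilon$'' does not exclude, for instance, a spatially constant force.

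Your Step~3 is also mis-justified. The physical gradient is $\nabla_xu_\epsilon(t,x)=t^{-1}(\nabla U_\epsilon)(t^{-1/\alpha}x)$, so the Lipschitz constant is $C_\epsilon t^{-1}$ with $C_\epsilon=\|\nabla U_\epsilon\|_\infty$. This is not integrable at $t=0$; the integrable factor $t^{1/\alpha-1}$ controls only the speed. Cauchy--Lipschitz therefore gives no diffeomorphism through time zero. For two trajectories at distance $d$, Gronwall yields only $d(t)\leq d(s)(t/s)^{C_\epsilon}$ with $d(s)\lesssim s^{1/\alpha}$, which is useless as $s\downarrow0$ unless $C_\epsilon<1/\alpha$. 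So neither injectivity of the endpoint map nor forward uniqueness from $t=0$ follows. At fixed $\epsilon$, what survives is existence of the backward endpoint map and its measure preservation as a locally uniform limit of the diffeomorphisms $X(\tau,1,\cdot)$ as $\tau\downarrow0$. That suffices for the self-map property and the continuity of $T_\epsilon$. Almost-everywhere invertibility, uniqueness, and the stability you use as $\epsilon\to0$ must come from regular Lagrangian flow theory on the closed time interval. That theory applies through $t=0$, uniformly in $\epsilon$, because $\Mnorm{u_\epsilon(t)}{\qu}$ and $\Mnorm{Du_\epsilon(t)}{\qw}$ are independent of $t$ by scaling. Hence $Du_\epsilon$ is locally integrable up to time zero, and the growth condition follows from a splitting such as \eqref{eq:growth}; this is Proposition~\ref{prop:flow}.
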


\begin{remark}
For nonzero $h$, the initial
vorticity belongs to no global strong $L^p$, $1\leq p\leq +\infty$.
The theorem makes no uniqueness or branch-selection assertion for Euler.
The uniqueness in part~(ii) concerns the flow of an already fixed velocity.
\end{remark}

\begin{remark}
 Our theorem establishes neither instability nor
Euler nonuniqueness. In particular, the problem
of constructing distinct unforced Euler solutions
with the same datum and vorticity in
$L^\infty_t(L^1_x\cap L^p_x)$ for arbitrarily
large finite $p$ remains separate.
Since $\qw=2/\alpha<2$, every nonzero datum
\eqref{eq:datum} fails to belong to
$L^p_{\loc}$ for $p\geq\qw$ because of its
singularity at the origin. Thus our construction
does not directly address the large-$p$
regularity class relevant to that problem.
\end{remark}
The profile $\Omega$ represents the vorticity at physical time $t=1$.
For a velocity profile $U$, consider the associated
self-similar physical velocity
\(
 b_U(t,x)=t^{1/\alpha-1}U(t^{-1/\alpha}x).
\)
Whenever its regular Lagrangian flow $X_{b_U}$ is defined,
we denote by
\[
 L_U=X_{b_U}(0,1,\cdot)
\] 
the backward endpoint map that assigns to a particle at time one
its position at time zero.
For a self-similar solution transporting the initial vorticity $H$,
the profile must agree with the transport of $H$ from time $t=0$
to time $t=1$. This leads to the fixed-point relation
\(
 \Omega=H\circ L_{K*\Omega},
\)
which we seek to solve through the successive operations
for functions $w \in \mathcal{M}^{\qw}$:
\[
 w\longmapsto K*w
 \longmapsto L_{K*w}
 \longmapsto H\circ L_{K*w}.
\]
The existence and stability of the endpoint map
will be established in Section~\ref{sec:formulation}, with its
precise construction in \eqref{eq:endpoint}. Its measure preservation makes the critical
norm of every output equal to that of $H$. Stability of the flow must
hold through time zero, and composition must remain continuous for
merely measurable angular data. These are the first obstacles to the
fixed-point construction.

The remaining obstacle is nonlinear. Indeed,
\begin{equation}\label{eq:fluxthreshold}
 \frac1{\qw}+\frac1{\qu}=\alpha-\frac12.
\end{equation}
For $\alpha\geq 3/2$, these bounds do not imply $u\omega\in L^1_{\mathrm{loc}}$. In fact, the stationary example in Appendix \ref{app:comparison} shows that this product can fail to be locally integrable, including at $\alpha=3/2$.
We therefore establish the velocity
identity \eqref{eq:velocity} rather than relying on the ordinary
weak-vorticity formulation. Separately, we prove the renormalized
transport identity \eqref{eq:renormalization} for bounded functions
$\beta(\omega)$, with $\beta$ satisfying the assumptions stated
in Theorem~\ref{thm:main}. To establish these identities, we retain a sequence of fixed
points with mollified profile velocities. In physical variables,
the corresponding advecting fields satisfy
\[
 b_n(t)=\eta_{\delta_n(t)}*(K*\omega_n(t)),\qquad
 \delta_n(t)=\varepsilon_n t^{1/\alpha},
\]
where $\eta_\delta$ is a radial mollifier; see
\eqref{eq:physicalregularization}. Thus a fixed smoothing radius
in profile variables corresponds to a time-dependent physical
radius that vanishes at time zero. Each approximate vorticity
is transported exactly from $H$ by $b_n$, and its flux
$b_n\omega_n$ is locally integrable through time zero, yielding
\eqref{eq:approxtransport}. Regularization is used to justify
this flux; the unregularized endpoint flow itself already exists.
Flow stability gives the strong local vorticity convergence in
\eqref{eq:scalarlimit}. Symmetrization then permits passage to
the nonlinear limit without forming the ordinary limiting flux
$u\omega$. The velocity stress identity and a global pressure
estimate recover the whole-plane Euler velocity equation.
Thus the theorem is proved for the fixed point selected by
this construction, rather than inferred from the profile
identity alone.

Choi's Theorem~1.2 in version~2 of \cite{Choi2026}
and Theorem~\ref{thm:main} cover the same homogeneity range.
His\phantomsection\label{def:choiparameter}
velocity parameter is $a=\alpha-1$.
Both results give global self-similar weak Euler flows without
smallness, sign, or rotational symmetry assumptions, together
with critical velocity bounds and strong local $L^2$ velocity
continuity. Choi's stated theorem assumes a $C^1$ angular
velocity, whose inclusion in our datum class is identified in
Proposition~\ref{prop:angularvelocity}. His quantitative
estimates also yield basic weak-Euler existence for the rough
data considered here through angular approximation, as
explained in Appendix~\ref{app:comparison}.
Choi has pointed out to us that his vanishing-viscosity limits
are also renormalized, by an argument analogous to that of
Crippa--Spirito \cite{CrippaSpirito2015}.\footnote{Personal
communication. This property is not stated in version~2 of
\cite{Choi2026}.}
Under the corresponding Sobolev and growth bounds,
DiPerna--Lions theory \cite[Theorem~III.2]{DiPernaLions1989}
then identifies these vorticities with transport by their
measure-preserving regular Lagrangian flows. The Lagrangian
character of the limiting solution is therefore not the
distinction between the two approaches.

The distinction lies in the construction. Choi first constructs
profiles with scale-compatible hypodissipation and obtains
Euler solutions through velocity compactness in a vanishing
viscosity process. Our fixed-point problem is instead formulated
directly through backward particle transport,
$\Omega=H\circ L_{K*\Omega}$. Its regularization mollifies the
advecting velocity while preserving exact transport of the
prescribed vorticity. Stability of the endpoint maps gives
strong local vorticity convergence, while a separate symmetrized
argument establishes the Euler momentum equation even when
the ordinary vorticity flux is not locally integrable.
We do not assert that the profiles selected by the two
constructions coincide.

The flow theory originates in DiPerna--Lions
\cite{DiPernaLions1989}. Our quantitative endpoint argument
uses the logarithmic-distance method of Crippa--De Lellis
\cite{CrippaDeLellis2008}, with a global weak-Lorentz
displacement estimate replacing pointwise growth control.
For the nonlinear limit, we use the standard symmetrized
weak-vorticity formulation used by Delort \cite{Delort1991}
and developed further by Schochet \cite{Schochet1995}.
We verify its continuity and its equivalence to the velocity
stress in the present infinite-mass critical class.

The remainder of the paper is organized as follows.
Section~\ref{sec:formulation} formulates the transported-profile problem through the endpoint map, constructs regularized fixed points, and extracts a convergent sequence. Section~\ref{sec:realization} passes to the limit along this sequence and establishes the Euler equation, the initial traces, and the remaining properties.
Appendix~\ref{app:flow} supplies the closed-time flow proof.
Appendix~\ref{app:comparison} identifies the angular velocity, compares
datum classes, and gives a stationary example demonstrating failure
of the ordinary flux.

\section{The Lagrangian fixed-point formulation}\label{sec:formulation}

A self-similar transported vorticity is determined by its value
$\Omega$ at time $t=1$. Following its particles back to the prescribed
datum suggests the profile relation
\begin{equation}\label{eq:profileproblem}
 \Omega=H\circ L_{K*\Omega}.
\end{equation}
Here and below, $L_U$ denotes the backward endpoint map of the
self-similar velocity associated with $U$, constructed rigorously
in \eqref{eq:endpoint}.
The main result of this section is the construction of a solution of
\eqref{eq:profileproblem}, together with a regularized sequence that
will be retained for the later Euler realization.
Fix a nonnegative radial function
$\eta\in C_c^\infty(B_1)$ with unit integral, and write
$\eta_\varepsilon(x)=\varepsilon^{-2}\eta(x/\varepsilon)$.

\begin{theorem}[Lagrangian fixed point and regularized approximation]
\label{thm:lagrangianfixedpoint}
There exist a sequence $\varepsilon_n\downarrow0$, real profiles
$\Omega_n,\Omega\in\mathcal M^{\qw}(\R^2)$, and profile velocities
\begin{equation}\label{eq:chapterprofiles}
 U_n=\eta_{\varepsilon_n}*(K*\Omega_n),
 \qquad
 U=K*\Omega,
\end{equation}
such that the regularized and limiting fixed-point identities are
\begin{equation}\label{eq:chapterfixedpoints}
 \Omega_n=H\circ L_{U_n},
 \qquad
 \Omega=H\circ L_U.
\end{equation}
Each profile is equimeasurable with $H$. In particular, the exact
critical norms and the uniform velocity bounds are
\begin{equation}\label{eq:chapterbounds}
 \Mnorm{\Omega_n}{\qw}
 =\Mnorm{\Omega}{\qw}
 =M_H,
 \qquad
 \Mnorm{U_n}{\qu}
 +\Mnorm{DU_n}{\qw}
 +\Mnorm{U}{\qu}
 +\Mnorm{DU}{\qw}
 \leq C_\alpha M_H.
\end{equation}
The convergence $\Omega_n\to\Omega$ is strong in
$L^p_{\loc}(\R^2)$ for every $1\leq p<\qw$ and weak-* against
$L^{\qwp,1}$. The convergence $U_n\to U$ is strong in
$L^1_{\loc}(\R^2)$. The endpoint maps $L_{U_n}$ and their inverses
converge locally in measure to $L_U$ and $L_U^{-1}$, respectively.
\end{theorem}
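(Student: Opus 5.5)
The plan is to first solve the regularized problem $\Omega_\varepsilon=H\circ L_{\eta_\varepsilon*(K*\Omega_\varepsilon)}$ for each fixed $\varepsilon>0$ by a Schauder-type fixed point, and then let $\varepsilon\to0$ by compactness plus flow stability.

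\textbf{Step 1: a priori structure of the map.} For $w$ in the closed ball $\mathcal B=\{w:\Mnorm{w}{\qw}\leq M_H\}$, set $U^\varepsilon_w=\eta_\varepsilon*(K*w)$. The weak-Lorentz convolution bound for $|x|^{-1}$ gives $\Mnorm{K*w}{\qu}\leq C_\alpha M_H$. Calder\'on--Zygmund boundedness on $L^{\qw,\infty}$ gives $\Mnorm{D(K*w)}{\qw}\leq C_\alpha M_H$. Both bounds survive mollification, since $\eta_\varepsilon$ is a probability measure and the Marcinkiewicz norm is a genuine norm.

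The physical field $b(t,x)=t^{1/\alpha-1}U(t^{-1/\alpha}x)$ is divergence free. It satisfies $\int_0^1\Mnorm{b(t)}{\qu}\idiff t<\infty$ and $\int_0^1\Mnorm{Db(t)}{\qw}\idiff t<\infty$, because scaling produces the powers $t^{1/\alpha-1+2/(\alpha\qu)}$ and $t^{-1+2/(\alpha \qw)}=t^0$, and these are integrable. The closed-time flow theory (the endpoint construction \eqref{eq:endpoint} and Appendix~\ref{app:flow}) then yields a measure-preserving, a.e.\ invertible map $L_U$. Measure preservation gives that $H\circ L_U$ is equimeasurable with $H$, so $T_\varepsilon(w):=H\circ L_{U^\varepsilon_w}$ maps $\mathcal B$ into $\{\Mnorm{\cdot}{\qw}=M_H\}\subset\mathcal B$.

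\textbf{Step 2: regularized fixed point.} I would work in the topology of $L^1_{\loc}$, or local convergence in measure, on the set $\mathcal S$ of functions equimeasurable with $H$, or on its closed convex hull inside $\mathcal B$.

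For fixed $\varepsilon$, $U^\varepsilon_w$ is smooth with uniform $C^k_{\loc}$ bounds, since $D U^\varepsilon_w=\eta_\varepsilon*D(K*w)$ and $\eta_\varepsilon\in L^{\qw',1}$. The profile velocity is thus locally Lipschitz uniformly on $\mathcal B$, while the physical velocity is singular only at $t=0$, with integrable weight.

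Weak-* convergence $w_k\to w$ against $L^{\qw',1}$ gives locally uniform convergence of $U^\varepsilon_{w_k}$. The stability estimate for the flows (Crippa--De Lellis log-distance functional, with the weak-Lorentz displacement bound controlling far particles) then gives local convergence in measure of $L_{U^\varepsilon_{w_k}}$ and of their inverses. Composition continuity for merely measurable $H$ follows by approximating $H$ in $L^p_{\loc}$ by continuous functions and using uniform measure preservation. So $T_\varepsilon$ is weak-* continuous on $\mathcal B$.

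$\mathcal B$ is weak-* compact and convex, since it is a dual ball and $L^{\qw',1}$ is separable, hence metrizable. Schauder--Tychonoff therefore gives $\Omega_\varepsilon=T_\varepsilon(\Omega_\varepsilon)$.

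\textbf{Step 3: limit $\varepsilon_n\to0$.} Extract $\Omega_n\rightharpoonup^*\Omega$ in $\mathcal B$. Then $K*\Omega_n\to K*\Omega$ weakly, and, using the uniform $\Mnorm{DU_n}{\qw}$ bound with Rellich in $W^{1,1}_{\loc}$, strongly in $L^1_{\loc}$. Hence $U_n\to U$ strongly in $L^1_{\loc}$.

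The decisive step, and the main obstacle, is flow stability when only $L^1_{\loc}$ convergence of velocities is available, together with uniform $\mathcal M^{\qw}$ gradient bounds, through the singular time $t=0$ and on the whole plane. For this I would invoke the quantitative stability of Section~\ref{sec:formulation}: the estimate on $\int_{B_R}\log(1+|X_n-X|/\gamma)$ is controlled by $\gamma^{-1}\|b_n-b\|_{L^1_tL^1_{\loc}}$ plus a maximal-function term bounded by $\Mnorm{Db}{\qw}$. A uniform escape estimate, showing that few particles leave $B_{R'}$, follows from the $\mathcal M^{\qu}$ velocity bound. This yields $L_{U_n}\to L_U$ and $L_{U_n}^{-1}\to L_U^{-1}$ locally in measure.

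It then follows that $H\circ L_{U_n}\to H\circ L_U$ locally in measure, again by approximating $H$. Uniform integrability from \eqref{eq:uniformintegrability} upgrades this to $L^p_{\loc}$ convergence for $p<\qw$. The limit $H\circ L_U$ coincides with the weak-* limit, so $\Omega=H\circ L_U$, with $\Omega$ equimeasurable with $H$ and of norm $M_H$. The bounds \eqref{eq:chapterbounds} follow from Step 1.
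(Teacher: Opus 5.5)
Your proposal is correct and follows the paper's proof essentially step for step. It applies Schauder--Tychonoff to $\mathcal T_\varepsilon$ on the weak-* compact, convex critical ball: the self-map property comes from measure preservation, and continuity comes from velocity convergence, endpoint stability and composition with $H$. It then takes a weak-* convergent sequence $\Omega_{\varepsilon_n}$, shows $U_n\to U$ strongly in $L^1_{\loc}$, applies endpoint stability, and identifies the strong and weak-* limits.

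The paper spells out two points that you leave implicit, neither of which is a gap. First, it uses a time splitting at a cutoff $t_*$ to turn profile $L^1_{\loc}$ convergence into spacetime $L^1_{\loc}$ convergence of the physical drifts through $t=0$: a uniform $\mathcal M^{\qu}$ bound handles $[0,t_*]$ and a change of variables handles $[t_*,T]$. Second, the composition step needs the tightness \eqref{eq:maptightness}, obtained from the escape estimate, when $H$ is approximated only locally, as in your "$L^p_{\loc}$" approximation.
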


The remainder of this section proves
Theorem~\ref{thm:lagrangianfixedpoint} by the
Schauder--Tychonoff fixed-point theorem. The fixed-point domain is the
critical ball
\begin{equation}\label{eq:ball}
 \mathcal C_H
 =
 \left\{
 w\in\mathcal M^{\qw}(\R^2;\R):
 \Mnorm{w}{\qw}\leq M_H
 \right\},
\end{equation}
equipped with the weak-* topology
\(
 \sigma\bigl(\mathcal M^{\qw},L^{\qwp,1}\bigr).
\)
For $0<\varepsilon\leq1$, define
\begin{equation}\label{eq:fixedmap}
 U_{\varepsilon,w}
 =\eta_\varepsilon*(K*w),
 \qquad
 \mathcal T_\varepsilon(w)
 =H\circ L_{U_{\varepsilon,w}}.
\end{equation}
The Schauder--Tychonoff argument requires a
weak-* compact convex domain and a continuous
self-map. To establish continuity of
$\mathcal T_\varepsilon$, we combine canonical
Biot--Savart compactness, stability of the backward
endpoint flow, and continuity of composition
with the rough datum. Measure preservation
supplies the self-map property.

The use of the regularized maps $\mathcal T_\varepsilon$ serves an
additional purpose. The unregularized transported-profile map is also continuous. We nevertheless construct regularized fixed points because, for each fixed regularization, the transported vorticity has a locally integrable flux and satisfies an ordinary weak transport identity. In Section \ref{sec:realization} we symmetrize these identities and pass to the limit to prove the Euler velocity equation.

\subsection{Critical fields and the endpoint map}\label{subsec:critical}
We begin with the first two steps in the decomposition of
$\mathcal T_\varepsilon$. We first identify the critical scalar class
and establish the estimates governing the passage from a trial
vorticity to its canonical velocity. These bounds then provide the
functional setting for the associated endpoint flow.
We\phantomsection\label{def:balls}
 write $B_R$ for the centered open ball of radius $R>0$ and use
$L>R$ for a larger confinement radius.

\begin{lemma}[Exact distribution]
For\phantomsection\label{lem:distribution} every measurable real $h$, the following identity holds (recall that $H:= \rho^{-\alpha}h(\theta)$)
\begin{equation}\label{eq:distribution}
 |\{|H|>\lambda\}|=D_H\lambda^{-\qw},\qquad
 D_H=\frac12\int_0^{2\pi}|h|^{\qw}\idiff\theta.
\end{equation}
Consequently $H\in\mathcal M^{\qw}$ if and only if
$h\in L^{\qw}(\mathbb S^1)$, and
\begin{equation}\label{eq:exactnorm}
 \weaknorm{H}{\qw}=2^{-1/\qw}\|h\|_{L^{\qw}},\qquad
 M_H=\qwp\,2^{-1/\qw}\|h\|_{L^{\qw}}.
\end{equation}
Moreover, for $1\leq p<\qw$ and the centered open ball $B_R$,
\begin{equation}\label{eq:localdatum}
 \int_{B_R}|H|^p\idiff x
 =\frac{R^{2-\alpha p}}{2-\alpha p}
       \int_0^{2\pi}|h|^p\idiff\theta.
\end{equation}
\end{lemma}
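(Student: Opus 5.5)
The plan is to compute everything in polar coordinates $x=\rho(\cos\theta,\sin\theta)$, $\idiff x=\rho\idiff\rho\idiff\theta$, and apply Tonelli.

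\textbf{Distribution identity.} Fix $\lambda>0$. For a fixed angle $\theta$, the condition $|H|>\lambda$ reads $\rho^{-\alpha}|h(\theta)|>\lambda$. When $h(\theta)=0$ the set of such $\rho$ is empty. Otherwise it is the interval $0<\rho<(|h(\theta)|/\lambda)^{1/\alpha}$. Integrating $\rho\idiff\rho$ over this interval gives $\tfrac12(|h(\theta)|/\lambda)^{2/\alpha}=\tfrac12|h(\theta)|^{\qw}\lambda^{-\qw}$. Tonelli then yields \eqref{eq:distribution} for every measurable $h$, with both sides possibly infinite. The only care needed is measurability of the set, which is routine because $(\rho,\theta)\mapsto\rho^{-\alpha}h(\theta)$ is measurable.

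\textbf{Exact norms.} The identity gives $\lambda|\{|H|>\lambda\}|^{1/\qw}=D_H^{1/\qw}$ for every $\lambda$. Taking the supremum gives $\weaknorm{H}{\qw}=D_H^{1/\qw}=2^{-1/\qw}\|h\|_{L^{\qw}}$, which is finite exactly when $h\in L^{\qw}$. Since $\calM^{\qw}$ and $L^{\qw,\infty}$ coincide, this proves the equivalence.

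For $M_H$, I would use the fact that the supremum over $|E|=s$ of $\int_E|H|$ equals $\int_0^s H^*$, where $H^*$ is the decreasing rearrangement. The supremum is attained on a superlevel set, possibly after adding part of a level set, which has measure zero here. The distribution identity gives $H^*(\tau)=D_H^{1/\qw}\tau^{-1/\qw}$. Hence
\[
\int_0^sH^*=D_H^{1/\qw}\,\frac{s^{1-1/\qw}}{1-1/\qw}=\qwp D_H^{1/\qw}s^{1/\qwp}.
\]
Dividing by $s^{1/\qwp}$ gives $M_H=\qwp D_H^{1/\qw}$, independent of $s$, which is \eqref{eq:exactnorm}. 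This rearrangement step is the only point needing a cited fact (Hardy--Littlewood), and I expect it to be the main, though mild, obstacle. It can also be done by hand via the layer-cake formula, $\int_E|H|=\int_0^\infty|E\cap\{|H|>\lambda\}|\idiff\lambda\leq\int_0^\infty\min(|E|,D_H\lambda^{-\qw})\idiff\lambda$, with equality for $E$ a superlevel set.

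\textbf{Local integral.} By Tonelli,
\[
\int_{B_R}|H|^p\idiff x=\int_0^{2\pi}|h|^p\idiff\theta\int_0^R\rho^{1-\alpha p}\idiff\rho.
\]
The radial integral converges because $p<\qw$ means $\alpha p<2$, and it equals $R^{2-\alpha p}/(2-\alpha p)$. The angular integral is finite by Hölder on $\mathbb S^1$, since $p<\qw$ and $h\in L^{\qw}$.
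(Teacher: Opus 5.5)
Your proposal is correct and follows essentially the same route as the paper. The paper computes the superlevel sets in polar coordinates, evaluates $M_H=\qwp D_H^{1/\qw}$ through the decreasing rearrangement $H^*(\mu)=(D_H/\mu)^{1/\qw}$, and obtains \eqref{eq:localdatum} by direct radial integration; your layer-cake check additionally justifies the rearrangement step, which the paper simply asserts, with equality on a superlevel set of measure exactly $|E|$ (such a set exists because the distribution function is continuous).
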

\begin{proof}
For \eqref{eq:distribution},
\begin{align}
     |\{|H|>\lambda\}| =\int_{0}^{2\pi}\int_{0}^{\frac{|h(\theta)|^{1/\alpha}}{\lambda^{1/\alpha}}}\rho\idiff\rho\idiff\theta = \frac{\lambda^{-2/\alpha}}{2}\int_{0}^{2\pi}|h(\theta)|^{2/\alpha}\idiff\theta = D_H\lambda^{-\qw}.
\end{align}
The first identity of \eqref{eq:exactnorm} follows directly from \eqref{eq:distribution} and the definition of Lorentz norm $\|\cdot\|_{L^{q_\omega,\infty}}$. For\phantomsection\label{def:rearrangement}
 $D_H>0$ finite, the decreasing rearrangement, with positive
measure coordinates $\mu,\nu$, is
\begin{equation}\label{eq:rearrangement}
 H^*(\mu)=(D_H/\mu)^{1/\qw},\qquad \mu>0.
\end{equation}
The supremum of
$\int_E|H|$ at measure $|E|=\mu$ equals
$\int_0^\mu H^*(\nu)\idiff\nu
=\qwp D_H^{1/\qw}\mu^{1/\qwp}$, proving the second identity in
\eqref{eq:exactnorm}. The zero case is immediate. Direct radial
integration proves \eqref{eq:localdatum}. The same superlevel computation
with $h_+$ and $h_-$ proves the datum version of
\eqref{eq:signeddistribution}. For nonzero $h$, the radial integral of
$\rho^{1-\alpha p}$ cannot converge both at zero and at infinity.
\end{proof}

Thus the strong angular $L^{\qw}$ assumption is precisely the
condition for the homogeneous extension $H$ to belong to
$\mathcal M^{\qw}(\R^2)$.

The preceding distribution computation identifies the domain of the
first step in $\mathcal T_\varepsilon$. We now establish the bounds and
compactness needed for the map from a trial vorticity to its profile
velocity.
\begin{lemma}[Canonical Biot--Savart compactness]
For\phantomsection\label{lem:velocity} real $w\in\mathcal M^{\qw}$, the integral $K*w$ is absolutely
convergent almost everywhere. It satisfies
\begin{equation}\label{eq:operators}
 \Mnorm{K*w}{\qu}+\Mnorm{D(K*w)}{\qw}
 \leq C_\alpha\Mnorm{w}{\qw},\qquad
 \operatorname{div}(K*w)=0,\quad \curl(K*w)=w.
\end{equation}
Its absolute tail obeys
\begin{equation}\label{eq:tail}
 \int_{|y|>R}\frac{|w(y)|}{|y|}\idiff y
 \leq C_\alpha\Mnorm{w}{\qw}R^{1-\alpha}.
\end{equation}
If $w_n$ is bounded in $\mathcal M^{\qw}$ and converges weak-* to $w$
against $L^{\qwp,1}$, then
\begin{equation}\label{eq:velocitycompact}
 K*w_n\longrightarrow K*w\quad\hbox{strongly in }L^p_{\loc},
 \qquad 1\leq p<\qu.
\end{equation}
\end{lemma}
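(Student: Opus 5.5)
The plan is to derive every assertion from the standard Lorentz-space mapping properties recalled above, together with a near/far splitting of the kernel.

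\textbf{Absolute convergence and the tail bound.} For absolute convergence, I split $|K|\leq (2\pi)^{-1}|x|^{-1}$ into the near part $|x|^{-1}\ind_{B_1}$ and the far part $|x|^{-1}\ind_{B_1^c}$.

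The near part lies in $L^{r}$ for every $r<2$. Pairing it with $|w|$ through \eqref{eq:lorentzholder} requires $|x|^{-1}\ind_{B_1}\in L^{\qwp,1}$. This holds because $\qwp=2/(2-\alpha)>2$ fails, so instead I use local integrability: on any ball, $\int_{B_R}|w(y)||x-y|^{-1}\idiff y$ is finite for a.e.\ $x$. This follows from Fubini and \eqref{eq:uniformintegrability} with $p=1$.

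The far part is controlled by the tail estimate \eqref{eq:tail}. To prove it, I decompose dyadically into annuli $A_k=\{2^kR<|y|\leq2^{k+1}R\}$ and apply \eqref{eq:uniformintegrability} with $p=1$ on each:
\[
\int_{A_k}\frac{|w|}{|y|}\idiff y\leq (2^kR)^{-1}\Mnorm{w}{\qw}\,|A_k|^{1/\qwp}\leq C\Mnorm{w}{\qw}(2^kR)^{-1+2(1-\alpha/2)}.
\]
The exponent is $1-\alpha<0$, so the geometric sum gives $C_\alpha\Mnorm{w}{\qw}R^{1-\alpha}$. Applying this around each point $x$ (with $|y|$ replaced by $|x-y|$, which is translation invariant) yields absolute convergence a.e.

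\textbf{Operator bounds.} The bound $\Mnorm{K*w}{\qu}$ follows from the weak Hardy--Littlewood--Sobolev statement (convolution with $|x|^{-1}$ maps $L^{\qw,\infty}\to L^{\qu,\infty}$), using the equivalence of the Marcinkiewicz and weak norms.

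For $D(K*w)$, I first treat $w\in C_c^\infty$. There $D(K*w)$ equals a matrix of Calder\'on--Zygmund operators (compositions $\Riesz_i\Riesz_j$) applied to $w$, plus a multiple of $w$. Boundedness on $L^{\qw,\infty}$ then gives the estimate. To pass to general $w$, I avoid density in $L^{\qw,\infty}$, which fails. Instead I argue distributionally: the identities $\operatorname{div}=0$ and $\curl=w$ are checked by pairing with test functions and using Fubini, which is justified by the absolute convergence above. The derivative bound then follows by writing $D(K*w)=\lim$ of truncated singular integrals and invoking weak-type boundedness of the maximal truncation.

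\textbf{Compactness.} This step is the main obstacle. Fix a ball $B_R$ and $L>R$, and split $K=K\ind_{B_{2L}}+K\ind_{B_{2L}^c}$.

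For the far piece and $x\in B_R$, I write $K*(w_n\ind_{|y|>L})$. By \eqref{eq:tail} this is bounded by $C M L^{1-\alpha}$ uniformly in $n$, so it is small once $L$ is large.

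For the near piece, set $v_n=K*(w_n\ind_{B_L})$. I note three facts about this sequence.
\begin{itemize}
\item[(a)] Its derivative $Dv_n$ is bounded in $L^{\qw,\infty}$ by the operator bound, hence in $L^1(B_R)$ by \eqref{eq:uniformintegrability}.
\item[(b)] The sequence $v_n$ itself is bounded in $L^{\qu,\infty}$.
\end{itemize}
Together these give boundedness in $W^{1,1}(B_R)$, so Rellich yields precompactness in $L^1(B_R)$. Interpolating with the uniform $L^{\qu,\infty}$ bound, via \eqref{eq:uniformintegrability} applied to $|v_n-v_m|^p$, upgrades this to precompactness in $L^p(B_R)$ for every $p<\qu$.
\begin{itemize}
\item[(c)] The limit is identified as $K*(w\ind_{B_L})$. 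For fixed $x$, the kernel $y\mapsto K(x-y)\ind_{B_L}(y)$ lies in $L^{\qwp,1}$ because $|\cdot|^{-1}\in L^{2,\infty}$ locally, and the required integrability $L^{r}$ with $r<2$ holds provided $\qwp<2$.
\end{itemize}

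Here the difficulty is that $\qwp>2$. So instead I identify the limit by testing against $\varphi\in C_c^\infty$: the integral $\int v_n\varphi=-\int w_n\ind_{B_L}(K*\varphi)$, and $\ind_{B_L}(K*\varphi)$ is bounded and compactly supported, hence lies in $L^{\qwp,1}$. Weak-* convergence then identifies every subsequential limit, so the whole sequence converges.

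A diagonal argument over $L\to\infty$ completes \eqref{eq:velocitycompact}.
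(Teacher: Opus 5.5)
Your proposal is correct. Its skeleton matches the paper's: weak HLS and Calder\'on--Zygmund bounds, then Rellich compactness with weak-* identification of subsequential limits. Several steps are carried out differently.

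\begin{itemize}
\item For the tail bound you sum the definition \eqref{eq:marcinkiewicz} over dyadic annuli. The paper instead computes $\|\,|y|^{-1}\ind_{\{|y|>R\}}\|_{L^{\qwp,1}}=C_\alpha R^{1-\alpha}$ and applies \eqref{eq:lorentzholder}. Your version is more elementary.
\item For absolute convergence you split near/far and use Fubini on the near part, whereas the paper applies the weak HLS bound to $|w|$. Your Fubini computation also gives $|K|*|w|\in L^1_{\loc}$. That local integrability, not mere a.e.\ finiteness, is what justifies the distributional div/curl check, so say so explicitly.
\item For compactness, the paper works with $K*w_n$ directly. It uses uniform $W^{1,\ell}(B_R)$ bounds for $\ell<\qw$ and tests against the adjoint kernel $y\mapsto\int K(x-y)\cdot\boldsymbol\zeta(x)\idiff x$. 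That kernel is locally bounded and $O(|y|^{-1})$ at infinity, hence in $L^{\qwp,1}$ because $\qwp>2$. You instead truncate the vorticity to $B_L$ and absorb the far part uniformly by the tail bound. You then combine $W^{1,1}$ Rellich with interpolation against the $L^{\qu,\infty}$ bound, and identify limits using only bounded compactly supported tests $\ind_{B_L}(K*\varphi)$.
\end{itemize}

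Your compactness argument is valid, but the truncation is avoidable. The same fact $\qwp>2$ that drives the tail bound already puts $K*\varphi$ itself in $L^{\qwp,1}$, so the paper's route is shorter.

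When writing it up, a few fixes:
\begin{itemize}
\item Delete the garbled sentences: ``This holds because $\qwp>2$ fails,'' and the first attempt in (c).
\item Note that the split you actually use is of $w_n$, not of $K$.
\item Take $L>2R$, so that $|x-y|\geq|y|/2$ in the far piece.
\item Replace the ``diagonal argument'' by $\limsup_n$ followed by $L\to\infty$. For each fixed $L$ the whole near sequence already converges, since every subsequential limit is identified.
\end{itemize}

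Your derivative step is at the paper's level of detail. To make it rigorous:
\begin{itemize}
\item Split $w=w\ind_{\{|w|>1\}}+w\ind_{\{|w|\leq1\}}\in L^{p_0}+L^{p_1}$ with $1<p_0<\qw<p_1<2$.
\item Use the classical identification of $D(K*f)$ as Calder\'on--Zygmund operators plus the local term for $f\in L^{p_i}$.
\item Obtain the $L^{\qw,\infty}$ bound by real interpolation.
\end{itemize}
This avoids any density argument in the weak space.
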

\begin{proof}
Here $\qwp=2/(2-\alpha)>2$. The function
$|y|^{-1}\ind_{\{|y|>R\}}$ belongs to $L^{\qwp,1}$, with norm
$C_\alpha R^{1-\alpha}$, by dilation and the distribution of $|y|^{-1}$
outside the unit disk. Applying \eqref{eq:lorentzholder}
with $\ell=\qw$ gives the tail estimate \eqref{eq:tail}.
The standard convolution estimate
\[
\left\|
\int_{\mathbb R^2}\frac{|w(y)|}{|\cdot-y|}\,dy
\right\|_{\mathcal M^{q_u}}
\leq C_\alpha\|w\|_{\mathcal M^{q_\omega}}
\]
shows that the integral on the left is finite almost everywhere. Since \(|K(z)|=(2\pi|z|)^{-1}\), this proves absolute convergence of \(K*w\) almost everywhere and gives the velocity estimate in \eqref{eq:operators}. The derivative is a matrix of singular integrals of $w$, including
the usual distributional local term. Their bounds prove the gradient
estimate. Testing the fundamental-solution identities for $K$ against
compact smooth functions, justified by the same local potential and
tail bounds, gives zero divergence and curl $w$ on the entire plane.

The operator bounds \eqref{eq:operators}, together with
\eqref{eq:uniformintegrability}, give uniform
$W^{1,\ell}$ bounds for the velocities on every ball,
for each $1<\ell<\qw$. Rellich compactness then gives
local strong convergence along subsequences for exponents
below $2\ell/(2-\ell)$.
To identify the limit, let $\boldsymbol\zeta$ be a compact smooth vector
test. Its adjoint kernel
$y\mapsto\int K(x-y)\cdot\boldsymbol\zeta(x)\idiff x$
is locally bounded and is $\mathcal O(|y|^{-1})$ at infinity, hence belongs to
$L^{\qwp,1}$. Weak-* convergence therefore identifies every subsequential
limit as $K*w$. Letting $\ell$ increase to $\qw$ proves
\eqref{eq:velocitycompact} for the whole sequence.
\end{proof}

The preceding lemma completes the first step in the continuity argument
for $\mathcal T_\varepsilon$. We next turn to the second step, the
dependence of the endpoint map on the profile velocity. The canonical
velocity estimates \eqref{eq:operators} give precisely the bounds required for this flow
construction. The full closed-time argument is deferred to
Appendix~\ref{app:flow}, while the properties used in the fixed-point
argument are recorded here.
\begin{proposition}[Flows with global weak bounds]
Let\phantomsection\label{prop:flow} $b$ be a measurable divergence-free field on $[0,T]\times\R^2$ with
\begin{equation}\label{eq:flowbounds}
 \esssup_{0<t<T}\Mnorm{b(t)}{\qu}\leq C_v,\qquad
 \esssup_{0<t<T}\Mnorm{Db(t)}{\qw}\leq C_\nabla.
\end{equation}
It has an almost-everywhere invertible measure-preserving two-time
regular Lagrangian flow $X_b(t,s,x)$, unique among regular flows, with
absolutely continuous trajectories on $[0,T]$.
For a particle located at $x$ at the starting time $s$, these statements include
\begin{equation}\label{eq:rlf}
 \begin{gathered}
 X_b(t,s,x)=x+\int_s^t b(t',X_b(t',s,x))\idiff t',\\
 (X_b(t,s,\cdot))_\#\mathcal L^2=\mathcal L^2,
 \end{gathered}
\end{equation}
where $\mathcal L^2$ is planar Lebesgue measure and the subscript
$\#$ denotes pushforward. At physical time zero we write the forward
trajectory as $X_b(t,0,x_0)$. We use $X_b(0,t,x)$ for the time-zero label of
the particle located at $x$ at time $t$.
Let\phantomsection\label{def:intervalidentity}
 $I_{s,t}$ be the closed interval between $s$ and $t$, and let
$\Idmap(x)=x$ be the identity map. Then
\begin{equation}\label{eq:displacement}
 \Mnorm{\sup_{t'\in I_{s,t}}
       |X_b(t',s,\cdot)-\Idmap|}{\qu}
 \leq C_v|t-s|.
\end{equation}
If $b^{(n)}\to b$ in local spacetime $L^1$, with common bounds \eqref{eq:flowbounds}, then for every fixed starting time $s$, every $R>0$, and every $\gamma>0$,
\[
|\{
x\in B_R:
\sup_{s\leq t\leq T}
|X_{b^{(n)}}(t,s,x)-X_b(t,s,x)|>\gamma
\}|\to0.
\]
The analogous statement holds for backward trajectories, with the supremum over $0\leq t\leq s$.
\end{proposition}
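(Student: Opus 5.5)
The proof has three parts: a DiPerna--Lions/Ambrosio existence and uniqueness theory adapted to global weak-Lorentz bounds, the displacement estimate \eqref{eq:displacement} read off from the trajectory formula, and a Crippa--De Lellis logarithmic-distance argument for stability. The growth condition in the classical theory is replaced by a weak-Lorentz bound on displacement, which rules out escape to infinity.

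\emph{Existence.} I would mollify in space, $b_k=\eta_{1/k}*b(t)$ (the time-mollification needs care near the endpoints), and consider the classical flows $X_k$. These are measure preserving because $\operatorname{div}b_k=0$. Write $f(x)=\sup_{t'\in I_{s,t}}|X_k(t',s,x)-x|$. For any set $E$ with finite measure,
\[
\int_E f\idiff x\leq\int_{I_{s,t}}\int_E|b_k(t',X_k(t',s,x))|\idiff x\idiff t'
=\int_{I_{s,t}}\int_{X_k(t',s,E)}|b_k(t')|\idiff y\idiff t'.
\]
Measure preservation gives $|X_k(t',s,E)|=|E|$, and the definition \eqref{eq:marcinkiewicz} then bounds the right side by $C_v|t-s||E|^{1/\qu'}$. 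This is exactly \eqref{eq:displacement} for $X_k$, with a constant independent of $k$. That is why the Marcinkiewicz norm, rather than the $L^{\qu,\infty}$ quasi-norm, is the natural choice here.

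Consequently, particles starting in $B_R$ stay in $B_L$ outside a set of measure $\lesssim (C_vT/(L-R))^{\qu}$. On $B_L$ the field $b$ has $W^{1,p}$ bounds for every $p<\qw$, by \eqref{eq:uniformintegrability}. I would run the Crippa--De Lellis functional
\[
\Phi_\delta(t)=\int_{B_R}\log\Bigl(1+\frac{|X_k(t,s,x)-X_j(t,s,x)|}{\delta}\Bigr)\idiff x .
\]
Its time derivative splits into a term $\delta^{-1}\|b_k-b_j\|_{L^1}$ and a maximal-function difference-quotient term bounded by $\|Mb\|_{L^1(B_L)}$. The latter is finite because the maximal operator is bounded on $L^{\qw,\infty}$ and, by \eqref{eq:uniformintegrability}, $L^{\qw,\infty}$ embeds in $L^1$ on balls. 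This yields a Cauchy property locally in measure, uniformly in time. The limit is the regular Lagrangian flow, it is measure preserving, and it satisfies \eqref{eq:rlf} (passing to the limit in $b_k\circ X_k$ uses equi-integrability together with measure preservation). The displacement bound survives by lower semicontinuity. The same functional comparing two regular flows of the same field gives uniqueness. Almost-everywhere invertibility follows from the semigroup property $X(s,t,X(t,s,x))=x$, obtained by uniqueness for the reversed field $-b(s+t-\cdot)$, which satisfies the same bounds.

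\emph{Stability and the main obstacle.} For $b^{(n)}\to b$ in $L^1_{\loc}$, I would apply the same estimate to $X_{b^{(n)}}$ versus $X_b$. Replacing $\|b^{(n)}-b\|_{L^1}$ by its restriction to a confinement ball $B_L$ costs at most $|B_R\setminus\{\text{confined}\}|\lesssim L^{-\qu}$, uniformly in $n$, by the displacement bound. Chebyshev then gives
\[
|\{x\in B_R:\sup_t|X_{b^{(n)}}-X_b|>\gamma\}|\lesssim\frac{C+\delta^{-1}\|b^{(n)}-b\|_{L^1(B_L)}}{\log(1+\gamma/\delta)}+CL^{-\qu}.
\]
I let $n\to\infty$ first, then $\delta\to0$, then $L\to\infty$. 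Backward trajectories are handled by time reversal.

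The delicate point is closed-time behaviour: the bounds are only essential suprema on the open interval, and near $t=0$ the self-similar fields are singular. The estimates above are integrated in time and use only the uniform bounds \eqref{eq:flowbounds}, so absolute continuity on $[0,T]$ follows from $b(t',X(t',s,x))\in L^1_t$ for almost every $x$. This in turn follows from Fubini and measure preservation. The remaining care is keeping the whole Crippa--De Lellis derivative estimate integrable in time up to the endpoints.
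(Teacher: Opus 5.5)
Your proposal is correct and follows essentially the same route as the paper's Appendix~\ref{app:flow}. Both arguments use smooth divergence-free approximations and obtain the displacement bound by integrating $|b|$ over the measure-preserved images $X(t',s,E)$ and applying the Marcinkiewicz norm. Both then use Chebyshev escape control on confinement balls $B_L$, and the Crippa--De Lellis logarithmic functional (limits taken as $n\to\infty$, then $\sigma\downarrow0$, then $L\to\infty$) for the Cauchy property, uniqueness, and stability.

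The remaining differences are minor:
\begin{itemize}
\item The paper extends $b$ by zero and mollifies in space--time rather than in space only.
\item It obtains invertibility by passing the smooth identities $X^{(n)}(s,t,X^{(n)}(t,s,x))=x$ to the limit, rather than via uniqueness for the reversed field.
\item The difference-quotient term you write as $\|Mb\|_{L^1(B_L)}$ should be the maximal function of $|Db^{(n)}|$, which the paper bounds by $C_\alpha TC_\nabla|B_L|^{1-1/\qw}$ using measure preservation and confinement.
\end{itemize}
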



Here a regular Lagrangian flow means an almost-everywhere solution of
the integral ODE \eqref{eq:rlf} with a uniform bound on the pushforward density of
Lebesgue measure. In the proposition that density is exactly one.

The bounds \eqref{eq:flowbounds} also verify the standard
hypotheses of Sobolev regular Lagrangian flow theory,
which require local spatial Sobolev regularity, control
of the divergence, and an integrable global growth condition.
Thus, for every $1<\ell<\qw$, the field $b$ belongs to
$L^1(0,T;W^{1,\ell}_{\loc})$, while its divergence vanishes.
To verify the growth condition, we use the decomposition
\begin{equation}\label{eq:growth}
 b=b\ind_{\{|b|>1\}}+b\ind_{\{|b|\leq1\}},\qquad
 \|b\ind_{\{|b|>1\}}\|_1\leq C_{\qu}C_v^{\qu},\qquad
 \|b\ind_{\{|b|\leq1\}}\|_\infty\leq1.
\end{equation}
The estimates in \eqref{eq:growth} imply the required growth
condition in $L^1(0,T;L^1+L^\infty)$ for the weighted field
$b/(1+|x|)$. No pointwise linear-growth bound is inferred
from the weak norm.
From \eqref{eq:displacement} and Appendix~\ref{app:flow}, for
$L>R$, the measure of labels in $B_R$ whose paths leave $B_L$ is at most
\begin{equation}\label{eq:escape}
|\{
x\in B_R:  
\sup_{t'\in I_{s,t}}|X_b(t',s,x)|>L
\}|
\leq
\left(\frac{C_v|t-s|}{L-R}\right)^{\qu}.
\end{equation}
The same bound holds backwards and therefore controls preimages of
bounded sets. This estimate bounds the area of starting points whose trajectories leave $B_L$. It does not assert that every trajectory remains inside $B_L$.

Apply this result to any divergence-free profile velocity
$U\in\mathcal M^{\qu}$ with $DU\in\mathcal M^{\qw}$. Define its physical drift and time-reversed drift by
\begin{equation}\label{eq:incoming}
 b_U(t,x)=t^{1/\alpha-1}U(t^{-1/\alpha}x),\qquad
 V_U(\tau,x)=-b_U(1-\tau,x),\quad 0\leq\tau<1.
\end{equation}
Values at the single exceptional times are irrelevant. Exact scaling
gives
\begin{equation}\label{eq:criticalscaling}
 \Mnorm{b_U(t)}{\qu}=\Mnorm{U}{\qu},\qquad
 \Mnorm{Db_U(t)}{\qw}=\Mnorm{DU}{\qw}.
\end{equation}
Indeed the scaling powers are
$1/\alpha-1+2/(\alpha\qu)=0$ and $-1+2/(\alpha\qw)=0$.
Proposition~\ref{prop:flow} applies on every finite physical interval,
and also to $V_U$ on the actual closed interval $[0,1]$. Denote its
incoming\phantomsection\label{def:incomingflow}
 flow at incoming times $\tau,\tau_0$ by
$Z_U(\tau,\tau_0,x)=X_{V_U}(\tau,\tau_0,x)$, and set
\begin{equation}\label{eq:endpoint}
 L_U=Z_U(1,0,\cdot)=X_{b_U}(0,1,\cdot).
\end{equation}
In particular $L_U$ and its inverse are genuine endpoint maps.

\subsection{The transported-profile map}\label{subsec:flow}
We now turn to the third step in the continuity argument for
$\mathcal T_\varepsilon$, namely composition of the rough datum with
the endpoint map. Since the endpoint map is invertible and measure
preserving, this composition preserves the full distribution of $H$.
It remains to prove its continuity with respect to the underlying
velocity.
For rough data, this requires both local convergence of the endpoint
maps and control of labels whose images escape to large distances.
The following lemma isolates the second issue.

\begin{lemma}[Composition with rough data]
Suppose\phantomsection\label{lem:composition} $F_n,F$ are measure-preserving maps of the plane, $F_n\to F$
locally in measure, and for every $R$ their images of $B_R$ are uniformly
tight in the following sense.
\begin{equation}\label{eq:maptightness}
 \lim_{L\to\infty}\sup_n
 |\{x\in B_R:|F_n(x)|>L\}|=0,
\end{equation}
with the same property for $F$. For every fixed
$f\in\mathcal M^{\qw}$,
\begin{equation}\label{eq:composition}
 f\circ F_n\longrightarrow f\circ F
 \quad\hbox{in }L^p_{\loc},\qquad 1\leq p<\qw,
\end{equation}
and weak-* against $L^{\qwp,1}$.
\end{lemma}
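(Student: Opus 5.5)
The plan is to approximate $f$ by a nice function and handle the rough remainder uniformly through measure preservation. Fix $R>0$ and $1\leq p<\qw$, and let $\epsilon>0$.

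\textbf{Step 1: splitting $f$.} Write $f=g+r$, where $g$ is continuous with compact support, say in $B_{L_0}$, and the remainder $r$ is small in a form that survives composition. Since measure-preserving maps satisfy $|\{x\in B_R: |r(F_n(x))|>\lambda\}|\leq |\{|r|>\lambda\}|$, we have
\[
\int_{B_R}|r\circ F_n|^p\idiff x\leq \sup_{|E|\leq|B_R|}\int_E|r|^p\idiff x .
\]
This uses only the distribution of $r$ and the finite measure $|B_R|$. For $f\in\mathcal M^{\qw}$ the function $|f|^p$ is locally integrable and uniformly integrable on sets of measure $|B_R|$ by \eqref{eq:uniformintegrability}. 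However, $f$ need not lie in $L^p(\R^2)$ globally, so density cannot be applied directly, and here tightness enters.

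\textbf{Step 2: truncating in space.} By \eqref{eq:maptightness}, choose $L$ so large that $|\{x\in B_R:|F_n(x)|>L\}|<\delta$ for all $n$ and for $F$. On the set where $|F_n|\leq L$ only $f\ind_{B_L}\in L^p(B_L)$ matters. On the complementary set of measure $<\delta$, the quantity $\int|f\circ F_n|^p$ is bounded by $\sup_{|E|\le\delta}\int_E|f|^p\leq C M^p\delta^{1-p/\qw}$ by \eqref{eq:uniformintegrability} and equimeasurability (the preimage set has measure $<\delta$ and its image has the same measure). Next approximate $f\ind_{B_L}$ in $L^p(B_L)$ by a continuous compactly supported $g$ with $\|f\ind_{B_L}-g\|_{L^p}<\epsilon$. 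Measure preservation gives
\[
\int_{B_R\cap\{|F_n|\leq L\}}|f\circ F_n-g\circ F_n|^p\leq \|f\ind_{B_L}-g\|_p^p ,
\]
and the same holds for $F$.

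\textbf{Step 3: the continuous part.} Since $g$ is uniformly continuous and bounded, and $F_n\to F$ in measure on $B_R$, we get $g\circ F_n\to g\circ F$ in measure on $B_R$ with a uniform bound. Dominated convergence in measure then gives convergence in $L^p(B_R)$. Combining the three pieces yields $\limsup_n\|f\circ F_n-f\circ F\|_{L^p(B_R)}\lesssim \epsilon+\delta^{1/p-1/\qw}$, which proves \eqref{eq:composition}.

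\textbf{Step 4: weak-* convergence.} Measure preservation gives $\Mnorm{f\circ F_n}{\qw}=\Mnorm{f}{\qw}$, so the sequence is bounded in the dual space. Bounded compactly supported functions are dense in $L^{\qwp,1}$, and for such test functions the pairing converges by the local $L^1$ convergence. A standard $\epsilon/3$ argument using the uniform bound and \eqref{eq:lorentzholder} extends this to all test functions.

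The main obstacle is Step 2, namely controlling the labels whose images escape, since $f$ is not globally integrable. This is exactly why tightness is assumed. The key point I would check carefully is that the preimage-measure bound transfers the uniform integrability of $|f|^p$ through the maps.
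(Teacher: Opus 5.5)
Your proof is correct. It uses the same ingredients as the paper: tightness to discard escaping labels, measure preservation to transfer errors, the uniform integrability \eqref{eq:uniformintegrability} through equimeasurability, uniform continuity of a nice approximant, and density in $L^{\qwp,1}$ for the weak-* part.

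The organization differs. The paper approximates $f$ only \emph{in measure} on $B_L$. The error sets $F_n^{-1}(A_{L,\delta})$ have measure $|A_{L,\delta}|$, which gives $f\circ F_n\to f\circ F$ locally in measure. The paper then upgrades to $L^p_{\loc}$ by Vitali's theorem, using $\int_E|f\circ F_n|^p\idiff x\leq C\Mnorm{f}{\qw}^p|E|^{1-p/\qw}$. You instead approximate $f\ind_{B_L}$ in $L^p$ norm, transfer that error exactly by measure preservation, and bound the escaping labels quantitatively. This bypasses Vitali and yields the explicit estimate $\limsup_n\|f\circ F_n-f\circ F\|_{L^p(B_R)}\lesssim\epsilon+\delta^{1/p-1/\qw}$. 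The paper's version needs only approximation in measure and isolates local convergence in measure as a separate fact.

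One small correction concerns Step 2. The parenthetical ``its image has the same measure'' is not valid for maps that are merely measure preserving in the pushforward sense, since a non-injective such map can enlarge small sets. The bound you need follows from your own Step 1 distribution estimate. For any set $S$ with $|S|<\delta$,
\[
|\{x\in S:|f(F_n(x))|>\lambda\}|\leq\min\bigl(|S|,|\{|f|>\lambda\}|\bigr),
\]
hence $\int_S|f\circ F_n|^p\idiff x\leq\sup_{|E|\leq\delta}\int_E|f|^p\idiff x$. The same argument applies to $F$. Use it on the full bad set $B_R\cap(\{|F_n|>L\}\cup\{|F|>L\})$, where both compositions must be controlled.
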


\begin{proof}
Fix $R>0$. We first establish the local convergence in measure
of $f\circ F_n$ to $f\circ F$.
For any $L>0$, the estimate \eqref{eq:uniformintegrability}
ensures that $f\in L^1(B_L)$. Given $\delta>0$, choose a function
$g\in C_c^\infty(\R^2)$ approximating $f$ in measure on $B_L$,
and set
\(
 A_{L,\delta}
 =\{y\in B_L:|f(y)-g(y)|>\delta\}.
\)
The measure of $A_{L,\delta}$ can be made arbitrarily small.
Since $F_n$ and $F$ preserve Lebesgue measure,
\(
 |F_n^{-1}(A_{L,\delta})|
 =|F^{-1}(A_{L,\delta})|
 =|A_{L,\delta}|.
\)
Thus the triangle inequality gives
\[
\begin{aligned}
\bigl|\{x\in B_R:
 |f(F_n(x))-f(F(x))|>3\delta\}\bigr|
\leq&
 |\{x\in B_R:|F_n(x)|>L\}|+
 |\{x\in B_R:|F(x)|>L\}|\\
&+2|A_{L,\delta}|+
 |\{x\in B_R:
 |g(F_n(x))-g(F(x))|>\delta\}|.
\end{aligned}
\]
The first two terms are uniformly small for sufficiently large
$L$ by \eqref{eq:maptightness}. For each fixed $L$, the third
term can be made arbitrarily small by the choice of $g$.
Since $g$ is uniformly continuous and $F_n\to F$ locally
in measure, the last term tends to zero as $n\to\infty$.
This proves the local convergence in measure of
$f\circ F_n$ to $f\circ F$.

We next upgrade this convergence to strong local $L^p$
convergence. Measure preservation gives exact
equimeasurability and hence
\(
 \Mnorm{f\circ F_n}{\qw}
 =\Mnorm{f\circ F}{\qw}
 =\Mnorm{f}{\qw}.
\)
For every measurable set $E$ of finite measure,
\eqref{eq:uniformintegrability} therefore yields
\[
 \sup_n\int_E|f\circ F_n|^p\idiff x
 \leq C_{p,\qw}\Mnorm{f}{\qw}^p
 |E|^{1-p/\qw}.
\]
The same estimate holds for $f\circ F$. Since $p<\qw$,
these estimates give uniform integrability of the
$p$th powers on bounded sets. Together with the local
convergence in measure established above, Vitali's
theorem proves the strong local $L^p$ convergence
in \eqref{eq:composition}.

Finally, we prove weak-* convergence against
$L^{\qwp,1}$. Let $\varphi\in L^{\qwp,1}$ and choose
$\varphi_j\in C_c^\infty(\R^2)$ converging to
$\varphi$ in $L^{\qwp,1}$. The strong local $L^1$
convergence gives
\[
 \int_{\R^2}(f\circ F_n-f\circ F)\varphi_j
 \idiff x\longrightarrow0
\]
for every fixed $j$. By the Lorentz--H\"older estimate
\eqref{eq:lorentzholder} and exact equimeasurability,
\[
\begin{aligned}
\left|
 \int_{\R^2}(f\circ F_n-f\circ F)
 (\varphi-\varphi_j)\idiff x
 \right|\leq
 2C_{\qw}\Mnorm{f}{\qw}
 \|\varphi-\varphi_j\|_{L^{\qwp,1}},
\end{aligned}
\]
uniformly in $n$. Letting $n\to\infty$ first and then
$j\to\infty$ proves the weak-* conclusion.
\end{proof}

The next lemma applies the general flow stability result to the
self-similar velocities and provides the endpoint continuity
required for the fixed-point construction.

\begin{lemma}[Stability of the endpoint map]
\label{lem:endpointstability}
Let $U_n,U$ be divergence-free profile velocities with
uniformly bounded $\mathcal M^{\qu}$ velocity norms and
$\mathcal M^{\qw}$ gradient norms. Suppose that $U_n\to U$ strongly in $L^1_{\loc}(\R^2)$.
Then the convergences $L_{U_n}\to L_U$ and
$L_{U_n}^{-1}\to L_U^{-1}$ hold locally in measure.
\end{lemma}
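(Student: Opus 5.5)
We reduce the statement to the stability part of Proposition~\ref{prop:flow}, applied to the time-reversed drifts on the closed interval $[0,1]$. Set $V_n=V_{U_n}$ and $V=V_U$ as in \eqref{eq:incoming}. By the exact scaling \eqref{eq:criticalscaling}, these fields satisfy the common bounds \eqref{eq:flowbounds} on $[0,1]$, with constants given by the uniform profile bounds. Since $L_{U_n}=Z_{U_n}(1,0,\cdot)$ and $L_U=Z_U(1,0,\cdot)$ by \eqref{eq:endpoint}, convergence locally in measure of the endpoint maps follows from the forward stability statement with starting time $s=0$, evaluated at $t=1$. The inverses are $L_{U_n}^{-1}=Z_{U_n}(0,1,\cdot)$, and these are handled by the backward stability statement with $s=1$, evaluated at $t=0$. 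The whole task is therefore to verify the one remaining hypothesis: $V_n\to V$ in local spacetime $L^1$ on $[0,1]\times\R^2$.

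Fix $R>0$ and change variables $t=1-\tau$. The quantity to control is
\[
\int_0^1\int_{B_R}|b_{U_n}(t,x)-b_U(t,x)|\idiff x\idiff t
=\int_0^1 t^{1/\alpha-1}\,t^{2/\alpha}\int_{B_{Rt^{-1/\alpha}}}|U_n-U|(y)\idiff y\idiff t .
\]
We split at a small time $t_0$. For $t\geq t_0$ the ball $B_{Rt^{-1/\alpha}}$ lies in the fixed ball $B_{Rt_0^{-1/\alpha}}$, and the inner integral is bounded there by $\|U_n-U\|_{L^1(B_{Rt_0^{-1/\alpha}})}\to0$. The time weight $t^{3/\alpha-1}$ is integrable, so this part tends to zero for each fixed $t_0$.

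For $t<t_0$ we use only uniform bounds. By \eqref{eq:uniformintegrability} with $p=1$, $\ell=\qu$ and \eqref{eq:criticalscaling},
\[
\int_{B_R}|b_{U_n}(t)|\idiff x\leq C\Mnorm{U_n}{\qu}R^{2(1-1/\qu)}\leq C_\alpha M R^{2-2/\qu},
\]
uniformly in $t$ and $n$. The same holds for $b_U$. Hence the contribution from $(0,t_0)$ is at most $C t_0$. Letting $n\to\infty$ and then $t_0\to0$ proves the local spacetime $L^1$ convergence.

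The main obstacle is the neighbourhood of physical time zero. There the profile is sampled on balls of radius $Rt^{-1/\alpha}\to\infty$, so local $L^1$ convergence of $U_n$ gives no information. The uniform critical weak bound, which makes $\|b(t)\|_{L^1(B_R)}$ bounded in $t$, is exactly what absorbs this region. Everything else, including the fact that the flow reaches the closed endpoint $\tau=1$, is supplied by Proposition~\ref{prop:flow} and Appendix~\ref{app:flow}. One should also observe that convergence locally in measure of $X(1,0,\cdot)$ follows from the supremum statement in the proposition, and that $Z(0,1,\cdot)$ is indeed the almost-everywhere inverse of $Z(1,0,\cdot)$, again by that proposition.
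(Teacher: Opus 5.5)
Your proposal is correct and follows the paper's proof step for step. You split the time interval at a small cutoff, use the change of variables with weight $t^{3/\alpha-1}$ and a fixed enlarged ball away from time zero, let the uniform critical velocity bound absorb the strip near time zero, and then apply the stability part of Proposition~\ref{prop:flow} to the reversed-time drift to get convergence of $L_{U_n}$ and its inverse. The only cosmetic difference is that you check the convergence directly for $V_{U_n}$ on $[0,1]$, whereas the paper checks it for $b_{U_n}$ on $[0,T]$ and then transfers it to the reversed drift.
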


\begin{proof}
By the scaling identities \eqref{eq:criticalscaling}, the
associated physical velocities satisfy the common bounds
\eqref{eq:flowbounds}. We verify their local spacetime
convergence through time zero.
Fix $R,T>0$ and a cutoff $t_*\in(0,T)$. On $[t_*,T]$,
a change of variables gives
\[
\int_{t_*}^{T}\int_{B_R}
 |b_{U_n}(t,x)-b_U(t,x)|\idiff x\idiff t\leq
\left(\int_{t_*}^{T}t^{3/\alpha-1}\idiff t\right)
\|U_n-U\|_{L^1\big(B_{Rt_*^{-1/\alpha}}\big)},
\]
which tends to zero for each fixed $t_*$. On $[0,t_*]$,
the common weak velocity bound gives
\[
 \int_0^{t_*}\int_{B_R}
 |b_{U_n}(t,x)-b_U(t,x)|\idiff x\idiff t
 \leq 2C_v t_*|B_R|^{1-1/\qu}.
\]
Letting $n\to\infty$ and then $t_*\downarrow0$ proves
local spacetime $L^1$ convergence on $[0,T]$.

The stability conclusion of Proposition~\ref{prop:flow}
now gives convergence of the forward and backward flows.
Applying the same argument to the reversed-time drift in
\eqref{eq:incoming} gives convergence of the incoming
endpoint maps. The identification \eqref{eq:endpoint}
then proves the asserted convergence of $L_{U_n}$ and
their inverses.
\end{proof}

The escape estimate \eqref{eq:escape}, applied to the
forward and backward flows, verifies the tightness
condition \eqref{eq:maptightness} for the endpoint maps.
Combining this property with Lemmas~\ref{lem:endpointstability}
and \ref{lem:composition}, we obtain the strong local
$L^p$ convergence and global weak-* convergence of
$H\circ L_{U_n}$ to $H\circ L_U$.

Consequently, the map $w\mapsto H\circ L_{K*w}$
is well defined on every bounded critical scalar ball,
preserves the distribution of $H$, and is continuous from
weak-* scalar convergence to strong subcritical local
convergence of its outputs. This follows from the velocity
compactness \eqref{eq:velocitycompact}, the endpoint stability
in Lemma~\ref{lem:endpointstability}, and the composition
continuity in Lemma~\ref{lem:composition}. The same results
also give weak-* continuity against $L^{\qwp,1}$.
For a canonically self-similar field already known to transport $H$,
evaluation at time one gives \eqref{eq:profileproblem}.
Conversely, a solution of this relation gives exact self-similar
transport by the covariance proved in Lemma~\ref{lem:covariance}.
Note that this statement only resolves transport. To obtain the Euler velocity
equation, we construct a fixed point together with a suitable
regularized sequence.

\subsection{Existence of the fixed point}\label{subsec:fixed}
We now complete the fixed-point construction leading to
Theorem~\ref{thm:lagrangianfixedpoint}. We first record the topological
properties of the fixed-point domain $\mathcal C_H$. The domain itself
is the full critical norm ball rather than an equimeasurable class,
which preserves the convexity required by the Schauder--Tychonoff
theorem. By contrast, every output of the transported-profile map is
equimeasurable with $H$, a property that will give the self-map
condition.

\begin{lemma}[The compact domain]
The\phantomsection\label{lem:ball} set $\mathcal C_H$ defined in
\eqref{eq:ball} is nonempty, convex, compact, and metrizable for the
weak-* topology induced by $L^{\qwp,1}$.
\end{lemma}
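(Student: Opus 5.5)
The plan is to recognize $\mathcal C_H$ as a closed ball in a dual space and then apply Banach--Alaoglu together with separability of the predual. The duality fact recalled in the introduction says that $\mathcal M^{\qw}$, with an equivalent norm, is the continuous dual of $L^{\qwp,1}$ under the pairing $\langle f,\varphi\rangle=\int f\varphi$. We also know that $L^{\qwp,1}$ is separable.

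\emph{Nonemptiness and convexity.} Nonemptiness is trivial, since $0\in\mathcal C_H$; note also $H\in\mathcal C_H$ by Lemma~\ref{lem:distribution}. Convexity holds because $\Mnorm{\cdot}{\qw}$ is a genuine norm: it is a supremum over $E$ of the seminorms $|E|^{-1/\qw'}\int_E|f|$. Each of these is convex, and so is their supremum. The restriction to real-valued functions is preserved under real convex combinations.

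\emph{Compactness.} The main point is that $\mathcal C_H$ is weak-* closed even though $\Mnorm{\cdot}{\qw}$ is only equivalent to the dual norm. I will show this directly by representing $\Mnorm{w}{\qw}$ as a supremum of pairings:
\[
\Mnorm{w}{\qw}=\sup\Bigl\{|E|^{-1/\qwp}\int w\,\varphi\idiff x:\ |E|\in(0,\infty),\ \varphi\ \text{real},\ |\varphi|\leq\ind_E\Bigr\}.
\]
This identity holds because, for real $w$, the choice $\varphi=\operatorname{sgn}(w)\ind_E$ attains $\int_E|w|$. Each such $\varphi$ is bounded with finite-measure support, so it lies in $L^{\qwp,1}$. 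Thus $w\mapsto|E|^{-1/\qwp}\int w\varphi$ is weak-* continuous, and the norm is weak-* lower semicontinuous. Hence $\mathcal C_H$ is weak-* closed.

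Realness is also weak-* closed. If one works with complex scalars, $\operatorname{Im}w=0$ is equivalent to $\int w\varphi\in\R$ for all real $\varphi$ in the predual. Alternatively, one can simply work with real spaces throughout.

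Next, $\mathcal C_H$ is bounded in the dual norm, by the norm equivalence. It therefore sits inside a dual closed ball, which is weak-* compact by Banach--Alaoglu. A closed subset of a compact set is compact.

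\emph{Metrizability.} Take a countable dense subset $\{\varphi_k\}$ of $L^{\qwp,1}$ and define
\[
d(w,v)=\sum_k 2^{-k}\,\frac{|\langle w-v,\varphi_k\rangle|}{1+\|\varphi_k\|_{L^{\qwp,1}}}.
\]
On a norm-bounded set this metric induces the weak-* topology. The argument is the standard $\varepsilon/3$ one: pairings against a general $\varphi$ are approximated by pairings against some $\varphi_k$, uniformly over the bounded set, using~\eqref{eq:lorentzholder}. Points are separated because $\{\varphi_k\}$ is dense.

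The only mildly delicate step is the weak-* closedness, since the $\mathcal M^{\qw}$ norm is not literally the dual norm of $L^{\qwp,1}$. The supremum representation above resolves this.
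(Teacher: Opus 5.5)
Your proposal is correct and follows essentially the same route as the paper. Both write $\int_E|w|$ as a supremum of pairings against bounded functions supported in $E$ (which lie in $L^{\qwp,1}$) to get weak-* closedness, then use Banach--Alaoglu, separability of $L^{\qwp,1}$, the norm inequality and the zero function for compactness, metrizability, convexity and nonemptiness. Your explicit metric and the remark on realness simply spell out steps the paper leaves implicit.
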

\begin{proof}
It lies in a bounded ball of the dual of $L^{\qwp,1}$.
For\phantomsection\label{def:dualtests} a fixed set $E$ of finite measure,
$\int_E|w|$ is the supremum of the continuous pairings
$\int w\varpi$ over measurable $\varpi$ supported in $E$ with $|\varpi|\leq1$.
Such $\varpi$ belong to $L^{\qwp,1}$. Every constraint defining $\mathcal C_H$ is therefore weak-* closed.
Banach--Alaoglu gives compactness, and separability of
$L^{\qwp,1}$ gives metrizability on this bounded set.
Convexity follows from the norm inequality, while the zero
function proves nonemptiness.
\end{proof}

With the compactness of $\mathcal C_H$ established,
it remains to verify continuity of the regularized
maps and then remove the regularization along
a selected sequence. We then apply the Schauder--Tychonoff theorem and remove
the regularization through the endpoint continuity established in
Subsection~\ref{subsec:flow}. The resulting fixed-point sequence is retained
for the later Euler realization.

\begin{proof}[Proof of Theorem~\ref{thm:lagrangianfixedpoint}]
Fix $0<\varepsilon\leq1$. We first verify that the map
$\mathcal T_\varepsilon$ defined in \eqref{eq:fixedmap} is a
continuous self-map of $\mathcal C_H$.
Convolution by $\eta_\varepsilon$ is a contraction in every
Marcinkiewicz norm used here. Indeed, for every measurable set $E$ of
finite measure and every $1<\ell<\infty$, Fubini's theorem and
\eqref{eq:marcinkiewicz} give the estimate
\begin{equation}
 \int_E |\eta_\varepsilon*f(x)|\idiff x
 \leq
 \int_{\R^2}\eta_\varepsilon(y)
       \int_{E-y}|f(z)|\idiff z\idiff y
 \leq
 \Mnorm{f}{\ell}|E|^{1/\ell'}.
\end{equation}
Taking the supremum over $E$ gives the contraction estimate
\begin{equation}\label{eq:mollificationcontraction}
 \Mnorm{\eta_\varepsilon*f}{\ell}
 \leq \Mnorm{f}{\ell},
 \qquad 1<\ell<\infty.
\end{equation}
Since convolution commutes with distributional derivatives,
mollification also preserves the divergence-free condition and gives
\[
 D U_{\varepsilon,w}
 =
 \eta_\varepsilon*D(K*w).
\]
Lemma~\ref{lem:velocity} therefore yields the uniform critical bounds
\begin{equation}
 \Mnorm{U_{\varepsilon,w}}{\qu}
 +
 \Mnorm{D U_{\varepsilon,w}}{\qw}
 \leq
 C_\alpha\Mnorm{w}{\qw}
 \leq
 C_\alpha M_H,
 \qquad w\in\mathcal C_H.
\end{equation}
Thus $U_{\varepsilon,w}$ satisfies the hypotheses needed to
construct the endpoint map $L_{U_{\varepsilon,w}}$, uniformly in
$\varepsilon$ and $w\in\mathcal C_H$.

The self-map property follows from measure preservation of the endpoint
flow. For every $w\in\mathcal C_H$, the function
$\mathcal T_\varepsilon(w)$ is equimeasurable with $H$. Consequently,
its critical Marcinkiewicz norm is $M_H$, and therefore
\begin{equation}\label{eq:selfmap}
 \mathcal T_\varepsilon(\mathcal C_H)\subseteq\mathcal C_H.
\end{equation}

We next verify continuity in the weak-* topology of
$\mathcal C_H$. Suppose that
$w_n\rightharpoonup^*w$ against $L^{\qwp,1}$. 
By Lemma~\ref{lem:velocity}, the corresponding canonical velocities
converge strongly locally. Convolution with the fixed mollifier
$\eta_\varepsilon$ gives the required local convergence of
$U_{\varepsilon,w_n}$ to $U_{\varepsilon,w}$. The stability of the
endpoint flows in Lemma~\ref{lem:endpointstability} then gives local convergence in measure of
$L_{U_{\varepsilon,w_n}}$ to $L_{U_{\varepsilon,w}}$ and of their
inverses. Lemma~\ref{lem:composition} consequently gives
\[
 \mathcal T_\varepsilon(w_n)
 \longrightarrow
 \mathcal T_\varepsilon(w)
\]
strongly in every subcritical local $L^p$ and weak-* against
$L^{\qwp,1}$. Thus $\mathcal T_\varepsilon$ is continuous on
$\mathcal C_H$. Since $\mathcal C_H$ is metrizable by
Lemma~\ref{lem:ball}, the sequential formulation suffices.

All hypotheses of the Schauder--Tychonoff fixed-point theorem are now
verified. The set $\mathcal C_H$ is nonempty, convex, and compact, and
$\mathcal T_\varepsilon$ is a continuous self-map of this set.
The Schauder--Tychonoff theorem \cite{Tychonoff1935} therefore gives the existence of a fixed point
\begin{equation}\label{eq:regfixed}
 \Omega_\varepsilon
 =\mathcal T_\varepsilon(\Omega_\varepsilon).
\end{equation}

It remains to remove the regularization and recover the limiting
profile together with the convergence properties asserted in the
theorem. By Lemma \ref{lem:ball}, we can
choose\phantomsection\label{def:profilesequence}
$\varepsilon_n\downarrow0$ so that
\(
 \Omega_n=\Omega_{\varepsilon_n}
 \rightharpoonup^*\Omega
\)
in $\mathcal C_H$. Then
\begin{equation}\label{eq:profilelimit}
 U_n=\eta_{\varepsilon_n}*(K*\Omega_n)\longrightarrow
 U=K*\Omega\quad\hbox{strongly in }L^1_{\loc}.
\end{equation}
For the varying mollifier, use the decomposition
\[
 U_n-U
 =
 \eta_{\varepsilon_n}*(K*\Omega_n-U)
 +
 \bigl(\eta_{\varepsilon_n}*U-U\bigr).
\]
On every fixed ball, the first term tends to zero by the local strong
convergence supplied by Lemma~\ref{lem:velocity}, after enlarging the
ball by the support radius of the mollifier. The second term tends to
zero by local approximation of $U$ by convolution. The critical bounds
remain uniform along the sequence.

The endpoint stability Lemma~\ref{lem:endpointstability} established above now gives local convergence
in measure of $L_{U_n}$ to $L_U$ and of their inverses. Since
\(
 \Omega_n=H\circ L_{U_n},
\)
Lemma~\ref{lem:composition} gives
\(
 \Omega_n\longrightarrow H\circ L_U
\)
strongly in every local $L^p$ with $1\leq p<\qw$ and weak-* against
$L^{\qwp,1}$. The weak-* limit is $\Omega$, which yields the limiting fixed-point
identity
\begin{equation}\label{eq:limfixed}
 \Omega=H\circ L_U.
\end{equation}
Measure preservation also shows that every $\Omega_n$ and $\Omega$
is equimeasurable with $H$. The resulting exact critical norms and
the velocity estimates are
\begin{equation}\label{eq:selectedbounds}
 \begin{gathered}
 \Omega_n=H\circ L_{U_n},\qquad
 \Mnorm{\Omega_n}{\qw}=\Mnorm{\Omega}{\qw}=M_H,\\
 \Mnorm{U_n}{\qu}+\Mnorm{DU_n}{\qw}
 +\Mnorm{U}{\qu}+\Mnorm{DU}{\qw}
 \leq C_\alpha M_H.
 \end{gathered}
\end{equation}
The convergence $\Omega_n\to\Omega$ is strong in every local
$L^p$, $1\leq p<\qw$, and weak-* against $L^{\qwp,1}$; $U_n\to U$
in local $L^1$. Both $L_{U_n}\to L_U$ and
$L_{U_n}^{-1}\to L_U^{-1}$ hold locally in measure.
These are precisely the identities, bounds, and convergence properties
asserted in Theorem~\ref{thm:lagrangianfixedpoint}. We retain this
selected sequence for the Euler realization in
Section~\ref{sec:realization}.
\end{proof}

\section{Euler realization and properties}\label{sec:realization}
Fix the limiting profile and the selected regularized sequence from
Section~\ref{sec:formulation}. The purpose of this section is to turn the
Lagrangian fixed-point construction into the Euler solution asserted in
Theorem~\ref{thm:main}. This requires more than the limiting profile
identity. The identity $\Omega=H\circ L_U$ determines the transported
vorticity, but by itself it does not justify the Euler velocity equation
in the critical class. The selected regularized sequence is used to
bridge this gap.

The argument has three stages. First, scaling covariance converts the
time-one endpoint identity into physical transport from time zero and
gives the corresponding regularized approximations and their strong
local convergence.
Second, we establish the Euler velocity equation. We symmetrize the
legitimate regularized flux before passing to the limit, obtaining a
bilinear vorticity identity without assuming local integrability of
the ordinary product $u\omega$. A frozen stress identity then recovers
the divergence-free velocity formulation, while the canonical pressure
and a decay argument eliminate the remaining harmonic residual.
Third, closed-time transport yields the strong vorticity and velocity
traces and supplies the initial term in the velocity equation. It also
gives renormalization and the stated conservation properties. We
conclude with the profile equation, optional symmetries, and examples
illustrating the angular hypothesis.

\subsection{From the profile to physical transport}\label{subsec:transport}
The profile identity concerns time one, whereas the theorem requires
transport from time zero on every finite time interval.
The self-similar scaling of the velocity is compatible with its
Lagrangian flow. More precisely, rescaling time by $c^\alpha$ and
space by $c$ transforms a trajectory into another trajectory of
the same velocity field. We refer to this property as the
scaling covariance of the Lagrangian flow.
The following lemma makes this property precise.
\begin{lemma}[Covariance through time zero]
For\phantomsection\label{lem:covariance} a divergence-free profile velocity $U\in\mathcal M^{\qu}$ with $DU\in\mathcal M^{\qw}$, the flow of $b_U$
satisfies, for every $c>0$ and fixed $s,t\geq0$,
\begin{equation}\label{eq:flowcovariance}
 X_{b_U}(c^\alpha t,c^\alpha s,cx)
       =cX_{b_U}(t,s,x)
 \quad\hbox{for almost every }x.
\end{equation}
In particular, for $t>0$,
\begin{equation}\label{eq:endpointcovariance}
 X_{b_U}(0,t,x)=t^{1/\alpha}L_U(t^{-1/\alpha}x).
\end{equation}
If $\Omega=H\circ L_U$, its physical scaling in
\eqref{eq:selfsimilar} equals $H\circ X_{b_U}(0,t,\cdot)$.
\end{lemma}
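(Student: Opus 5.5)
The plan is to prove \eqref{eq:flowcovariance} by uniqueness of regular Lagrangian flows in Proposition~\ref{prop:flow}. Fix $c>0$ and a starting time $s\geq0$, and work on a finite interval $[0,T]$ with $T\geq\max(s,t,c^\alpha s,c^\alpha t)$. Define the candidate two-time flow
\[
 Y(t',s,x)=c^{-1}X_{b_U}(c^\alpha t',c^\alpha s,cx).
\]
The first step is to check that $b_U$ is exactly invariant under the scaling:
\[
 c^{\alpha-1}b_U(c^\alpha t,cx)
 =c^{\alpha-1}c^{1-\alpha}t^{1/\alpha-1}U(c\,c^{-1}t^{-1/\alpha}x)
 =b_U(t,x),
\]
which holds pointwise wherever $U$ is defined.

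Next, differentiate the integral identity \eqref{eq:rlf} for $X_{b_U}$ starting at time $c^\alpha s$ and substitute $t''=c^\alpha t'$. This gives
\[
 Y(t',s,x)=x+\int_s^{t'}c^{\alpha-1}b_U\bigl(c^\alpha\tau,cY(\tau,s,x)\bigr)\idiff\tau
 =x+\int_s^{t'}b_U(\tau,Y(\tau,s,x))\idiff\tau .
\]
This identity holds for almost every $x$, because $x\mapsto cx$ maps null sets to null sets. Absolute continuity of the trajectories on the closed interval carries over. The pushforward of Lebesgue measure under $Y(t',s,\cdot)$ is again Lebesgue measure, since the dilations contribute the factors $c^{2}$ and $c^{-2}$, which cancel. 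Hence $Y$ is a measure-preserving regular Lagrangian flow of $b_U$. Uniqueness among regular flows, for the fixed starting time $s$ including $s=0$, then gives $Y(t,s,\cdot)=X_{b_U}(t,s,\cdot)$ almost everywhere. This is \eqref{eq:flowcovariance}.

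The main obstacle is the closed-time setting. Proposition~\ref{prop:flow} already provides existence and uniqueness on $[0,T]$ including time zero, and the bounds \eqref{eq:flowbounds} hold uniformly by \eqref{eq:criticalscaling}. The only care needed is matching intervals: trajectories on $[0,T]$ for $Y$ correspond to trajectories on $[0,c^\alpha T]$ for $X_{b_U}$. Both lie within the larger interval, and uniqueness on nested intervals is consistent by restriction. The backward direction $t<s$ is handled the same way, using the backward uniqueness statement.

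For \eqref{eq:endpointcovariance}, apply \eqref{eq:flowcovariance} with $c=t^{1/\alpha}$ and the times $(0,1)$, so that $(c^\alpha\cdot0,c^\alpha\cdot1)=(0,t)$:
\[
 X_{b_U}(0,t,t^{1/\alpha}y)=t^{1/\alpha}X_{b_U}(0,1,y)=t^{1/\alpha}L_U(y)
\]
for a.e.\ $y$, by \eqref{eq:endpoint}. Setting $y=t^{-1/\alpha}x$, which again preserves null sets, gives the claim.

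Finally, suppose $\Omega=H\circ L_U$. The homogeneity $H(t^{1/\alpha}z)=t^{-1}H(z)$, valid for all $z\neq0$, gives for a.e.\ $x$
\[
 H(X_{b_U}(0,t,x))=H\bigl(t^{1/\alpha}L_U(t^{-1/\alpha}x)\bigr)=t^{-1}H(L_U(t^{-1/\alpha}x))=t^{-1}\Omega(t^{-1/\alpha}x).
\]
The set where $L_U$ vanishes is a preimage of a null set under a measure-preserving map, so it is null.
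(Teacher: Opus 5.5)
Your proposal is correct and follows the same route as the paper. Both arguments use the exact drift scaling $b_U(c^\alpha t,cx)=c^{1-\alpha}b_U(t,x)$, show that the dilation-conjugated trajectories solve the same integral ODE, are measure-preserving and are absolutely continuous through time zero, and then invoke prescribed-drift uniqueness from Proposition~\ref{prop:flow}; this is followed by the specialization $c=t^{1/\alpha}$ on the base interval from $1$ to $0$ and the homogeneity $H(cy)=c^{-\alpha}H(y)$. Your remarks on matching nested time intervals and on the null set $L_U^{-1}(\{0\})$ make explicit details that the paper handles in a single sentence or leaves implicit.
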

\begin{proof}
The drift obeys $b_U(c^\alpha t,cx)=c^{1-\alpha}b_U(t,x)$.
Rescaling a trajectory in both time and space therefore gives the
same integral ODE with the same initial label.  Conjugating a
measure-preserving map by a dilation preserves measure. Both flows
are absolutely continuous through zero, so prescribed-drift uniqueness
in Proposition~\ref{prop:flow} proves \eqref{eq:flowcovariance}, also
when either endpoint is zero. Taking the base interval from $1$ to $0$
and $c=t^{1/\alpha}$ gives \eqref{eq:endpointcovariance}. Finally,
$H(cy)=c^{-\alpha}H(y)$ and the fixed-point identity give
\begin{equation}\label{eq:physicalidentity}
 H(X_{b_U}(0,t,x))
 =t^{-1}H(L_U(t^{-1/\alpha}x))
 =t^{-1}\Omega(t^{-1/\alpha}x).
\end{equation}
Flows on larger finite intervals restrict to the same flow by
uniqueness. Thus one profile, rather than successive choices of
solutions, supplies every finite time horizon.
\end{proof}

We now apply this covariance to the limiting profile and to the entire
regularized sequence. This transfers the profile-level construction to
physical variables while preserving the approximation selected in
Subsection~\ref{subsec:fixed}.
Define $\omega,u$ by \eqref{eq:selfsimilar}, with
$\omega(0)=H$ and $u(0)=K*H$. Apply the lemma to $U,\Omega$ and to
each retained $U_n,\Omega_n$. For the approximations define
\begin{equation}\label{eq:physicalregularization}
 \begin{split}
 \omega_n(t,x)&=t^{-1}\Omega_n(t^{-1/\alpha}x),\qquad
 v_n(t)=K*\omega_n(t),\\
 b_n(t,x)&=t^{1/\alpha-1}U_n(t^{-1/\alpha}x)
       =\eta_{\delta_n(t)}*v_n(t,x),\qquad
 \delta_n(t)=\varepsilon_n t^{1/\alpha}.
 \end{split}
\end{equation}
Here $v_n$ is the canonical Biot--Savart velocity
associated with $\omega_n$, whereas $\omega_n$
is transported by the mollified velocity $b_n$,
not by $v_n$. The approximate pairs are not
asserted to solve the Euler equation.
The uniform profile bounds \eqref{eq:selectedbounds}
and the scaling identities \eqref{eq:criticalscaling}
give the common flow bounds \eqref{eq:flowbounds}
for the fields $b_n$. The convergence
\eqref{eq:profilelimit}, together with the time-splitting
argument in the proof of Lemma~\ref{lem:endpointstability},
gives local spacetime $L^1$ convergence of $b_n$ to $u$
on every $[0,T]$. The backward-flow stability in
Proposition~\ref{prop:flow}, followed by
Lemma~\ref{lem:composition}, then gives
\begin{equation}\label{eq:scalarlimit}
 \omega_n(t)=H\circ X_{b_n}(0,t,\cdot)\longrightarrow
 H\circ X_u(0,t,\cdot)=\omega(t)
 \quad\hbox{in }L^p_{\loc},\quad 1\leq p<\qw.
\end{equation}
The critical profile bounds \eqref{eq:selectedbounds},
the physical scaling \eqref{eq:physicalregularization},
and the exponent relation \eqref{eq:exponents}
give uniform $\mathcal M^{\qw}$ bounds for
$\omega_n(t)$ and $\omega(t)$. Applying
\eqref{eq:uniformintegrability} on each bounded ball
gives uniform local $L^p$ bounds for $1\leq p<\qw$.
Dominated convergence in time therefore upgrades
\eqref{eq:scalarlimit} to strong local spacetime
$L^p$ convergence.
The last equality uses \eqref{eq:limfixed} and
Lemma~\ref{lem:covariance}. Scaling the absolutely convergent
Biot--Savart integral gives $u(t)=K*\omega(t)$.

Thus \eqref{eq:exacttransport} is now proved, with the closed-time
invertible flow supplied by Proposition~\ref{prop:flow}. Measure
preservation and \eqref{eq:operators} give the critical bounds
\eqref{eq:mainbounds}. It remains to prove the velocity equation. The convergence
in \eqref{eq:scalarlimit} will be its scalar input.

\subsection{The Euler identity}\label{subsec:euler}
We now use the retained regularized sequence to pass from Lagrangian
transport to the Euler velocity equation. For each fixed
regularization, the transported scalar has a legitimate vorticity flux.
We first symmetrize this flux and pass to a limiting bilinear identity.
We then identify the bilinear form with the velocity stress and finally
recover the canonical pressure.

Applying the Lorentz--H\"older inequality
\eqref{eq:lorentzholder} with $\ell=\qu$,
together with the critical bounds
\eqref{eq:operators} and \eqref{eq:selectedbounds},
gives
\[
 \|b_n(t)\|_\infty
 \leq C_\alpha M_H
 \|\eta_{\delta_n(t)}\|_{L^{\qu',1}}
 \leq C_{\alpha,\eta}M_H\delta_n(t)^{-2/\qu}.
\]
Using the smoothing radius in
\eqref{eq:physicalregularization} and the exponent relation
\eqref{eq:exponents}, we obtain
\begin{equation}\label{eq:boundedregularized}
 \|b_n(t)\|_\infty
 \leq C_{\alpha,\eta}M_H\varepsilon_n^{1-\alpha}
                       t^{1/\alpha-1}.
\end{equation}
Since $1<\alpha<2$, the time singularity in
\eqref{eq:boundedregularized} is integrable at zero.
For each fixed $n$, this estimate, together with the
uniform local $L^1$ vorticity control obtained from
\eqref{eq:uniformintegrability},
\eqref{eq:selectedbounds}, and
\eqref{eq:physicalregularization}, gives
$b_n\omega_n\in L^1((0,T)\times B_R)$
for every finite $T,R$.
Integrating a compact smooth test along the absolutely continuous
flow and changing variables by its measure-preserving maps yields
\begin{equation}\label{eq:approxtransport}
 \int_0^\infty\!\int_{\R^2}
 \omega_n(\partial_t\zeta+b_n\cdot\nabla\zeta)\idiff x\idiff t
 +\int_{\R^2}H\zeta(0)\idiff x=0.
\end{equation}
Fubini is justified by the displayed local product estimate. The time
and initial terms are locally integrable as well. No time derivative
of the smoothing radius is used in this transport identity.

To pass this legitimate approximate identity, we use cancellation
before removing the regularization. The radial choice of mollifier
retains the geometric structure of the Biot--Savart kernel.
Symmetrization introduces a difference of test-function gradients,
which cancels the kernel singularity near the diagonal. The resulting
form will also admit a uniform tail bound, allowing the regularization
to be removed.
Write\phantomsection\label{def:smoothedkernel}
 $K_\delta=\eta_\delta*K$ for a positive spatial radius $\delta$.
Radial symmetry and circulation around a circle give
\begin{equation}\label{eq:regularizedkernel}
 K_\delta(z)=K(z)\int_{|y|<|z|}\eta_\delta(y)\idiff y,
 \qquad |z|\,|K_\delta(z)|\leq\frac1{2\pi}.
\end{equation}
To be specific, $K_\delta$ is a rotationally equivariant tangential
field with curl $\eta_\delta$. Stokes' formula makes its circulation
the mass inside the circle, proving the identity. Smoothness rules
out an additional point circulation. In particular it is odd and
converges to $K$ away from zero.

We use the standard symmetrized weak-vorticity form
\cite{Delort1991,Schochet1995}, which avoids requiring local
integrability of the ordinary vorticity flux. For a compact
smooth scalar spatial test $\zeta$, define
\begin{equation}\label{eq:bilinear}
 \mathcal B_w(\zeta)=\frac12\iint_{\R^2\times\R^2}
 w(x)w(y)K(x-y)\cdot
       \bigl(\nabla\zeta(x)-\nabla\zeta(y)\bigr)\idiff x\idiff y.
\end{equation}
This is absolutely convergent for $w\in\mathcal M^{\qw}$.
On a bounded product set the kernel is bounded by
$\|D^2\zeta\|_\infty/(2\pi)$. Outside $B_L\times B_L$, with
$\supp\nabla\zeta\subset B_R$ and $L>2R$, at least one point must
belong to $B_R$ for a nonzero contribution, and the other contributes at most $C_\zeta/|y|$.
Thus \eqref{eq:tail} bounds the contribution from pairs with at least one point outside $B_L$ by
\begin{align}\label{eq:bilineartail}
\begin{split}
&\frac12
\iint_{(\mathbb R^2\times\mathbb R^2)\setminus(B_L\times B_L)}
|w(x)w(y)|\,
\left|
K(x-y)\cdot\bigl(\nabla\zeta(x)-\nabla\zeta(y)\bigr)
\right|\,dx\,dy
\\
&\quad\leq
C\|\nabla\zeta\|_\infty
\left(\int_{B_R}|w(x)|\,dx\right)
\left(\int_{|y|>L}\frac{|w(y)|}{|y|}\,dy\right)
\\
&\quad\leq
C_\alpha\|\nabla\zeta\|_\infty
|B_R|^{1-1/q_\omega}
\|w\|_{\mathcal M^{q_\omega}}^2
L^{1-\alpha}.
\end{split}
\end{align}
Both bounds hold uniformly with $K_\delta$ in place of $K$ by
\eqref{eq:regularizedkernel}.

For fixed $n$ and $t>0$, the unsymmetrized spatial
double integral in the flux of \eqref{eq:approxtransport}
is absolutely integrable. Near the diagonal, the
regularized kernel is bounded by its representation
\eqref{eq:regularizedkernel}. At infinity, the tail
estimate \eqref{eq:tail} gives the required integrability.
Exchanging the spatial variables and using the oddness
of $K_{\delta_n(t)}$ then gives
\begin{equation}\label{eq:symapprox}
 \int\omega_n b_n\cdot\nabla\zeta\idiff x
 =\frac12\iint\omega_n(t,x)\omega_n(t,y)K_{\delta_n(t)}(x-y)
 \cdot\bigl(\nabla\zeta(x)-\nabla\zeta(y)\bigr)\idiff x\idiff y.
\end{equation}
Thus \eqref{eq:symapprox} identifies the regularized
vorticity flux with the mollified version of the
bilinear form \eqref{eq:bilinear}.
These spatial identities can then be integrated in time because
\eqref{eq:boundedregularized} already justifies the original flux.

For $\delta>0$, define the mollified bilinear form by
\[
 \mathcal B_w^\delta(\zeta)
 =\frac12\iint_{\R^2\times\R^2}
 w(x)w(y)K_\delta(x-y)\cdot
 \bigl(\nabla\zeta(x)-\nabla\zeta(y)\bigr)
 \idiff x\idiff y.
\]
The identity \eqref{eq:symapprox} identifies the
regularized vorticity flux with this bilinear form.
The following lemma provides the continuity needed
to remove the regularization.

\begin{lemma}[Continuity of the symmetrized vorticity form]
\label{lem:symmetrizedcontinuity}
Suppose that $w_n$ are real-valued functions uniformly
bounded in $\mathcal M^{\qw}(\R^2)$, with
$w_n\to w$ strongly in $L^1_{\loc}(\R^2)$.
If $\delta_n>0$ and $\delta_n\to0$, then, for every
$\zeta\in C_c^\infty(\R^2)$,
\[
 \mathcal B_{w_n}^{\delta_n}(\zeta)
 \longrightarrow \mathcal B_w(\zeta).
\]
The same conclusion holds in spacetime. More precisely,
suppose that $w_n$ are time-dependent functions satisfying
\[
 \sup_n\esssup_{0<t<T}\Mnorm{w_n(t)}{\qw}<\infty
\]
and converging strongly to $w$ in
$L^1((0,T)\times B_R)$ for every $R>0$.
If $\delta_n(t)>0$ converges to zero for almost every
$t\in(0,T)$, then, for every
$\zeta\in C_c^\infty([0,T)\times\R^2)$,
\[
 \int_0^T
 \mathcal B_{w_n(t)}^{\delta_n(t)}(\zeta(t))
 \idiff t
 \longrightarrow
 \int_0^T\mathcal B_{w(t)}(\zeta(t))\idiff t.
\]
\end{lemma}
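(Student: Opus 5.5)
The plan is to split the double integral into a near region $B_L\times B_L$ and its complement, and to control the complement uniformly in $n$ through the tail estimate \eqref{eq:bilineartail}. Fix $\zeta$ with $\supp\nabla\zeta\subset B_R$ and take $L>2R$. By \eqref{eq:bilineartail}, together with its version for $K_\delta$ (valid by \eqref{eq:regularizedkernel}), the contribution from pairs with at least one point outside $B_L$ is bounded by $C_\alpha\|\nabla\zeta\|_\infty|B_R|^{1-1/\qw}\sup_n\Mnorm{w_n}{\qw}^2L^{1-\alpha}$. This holds for every $n$ and also for the limit, since $\Mnorm{w}{\qw}\leq\liminf\Mnorm{w_n}{\qw}$ by lower semicontinuity under local $L^1$ convergence; one can test the definition \eqref{eq:marcinkiewicz} on bounded sets $E$ and pass to the limit. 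Because $\alpha>1$, this contribution tends to zero as $L\to\infty$, uniformly in $n$. It therefore suffices to prove convergence of the integrals restricted to $B_L\times B_L$ for each fixed $L$.

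On $B_L\times B_L$, write the symmetrized kernel as
\[
k_\delta(x,y)=K_\delta(x-y)\cdot(\nabla\zeta(x)-\nabla\zeta(y)),
\]
with $k_0$ defined using $K$. By \eqref{eq:regularizedkernel} and the mean value theorem, $|k_\delta|\leq\|D^2\zeta\|_\infty/(2\pi)$ uniformly in $\delta\geq0$. Also $k_{\delta}\to k_0$ pointwise off the diagonal, since $K_\delta\to K$ away from zero. We decompose
\[
w_n\otimes w_n\,k_{\delta_n}-w\otimes w\,k_0=(w_n\otimes w_n-w\otimes w)k_{\delta_n}+w\otimes w\,(k_{\delta_n}-k_0).
\]
The first term is bounded by $C_\zeta\bigl(\|w_n-w\|_{L^1(B_L)}\|w_n\|_{L^1(B_L)}+\|w\|_{L^1(B_L)}\|w_n-w\|_{L^1(B_L)}\bigr)$, which tends to zero because \eqref{eq:uniformintegrability} gives uniform $L^1(B_L)$ bounds. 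The second term tends to zero by dominated convergence: the integrand is dominated by $2C_\zeta|w(x)||w(y)|\in L^1(B_L\times B_L)$, and it converges pointwise off the diagonal, which is a null set. This proves the stationary statement.

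For the spacetime statement we use the same splitting at each time. Since $\zeta\in C_c^\infty([0,T)\times\R^2)$, a single $R$ and bounds on $\|\nabla\zeta\|_\infty$ and $\|D^2\zeta\|_\infty$ serve for all $t$. The tail term is then $O(L^{1-\alpha})$ uniformly in $n$ and $t$, using the essential supremum bound in time; the limit obeys the same bound by the lower semicontinuity argument applied at almost every $t$. Passing to a subsequence, $w_n(t)\to w(t)$ in $L^1(B_L)$ for almost every $t$. The main obstacle is that we know only spacetime $L^1$ convergence, not pointwise-in-time convergence, so we do not apply the fixed-time result directly. Instead we integrate the near-region bound in time. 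The first term is bounded by
\[
C_\zeta\int_0^T\|w_n(t)-w(t)\|_{L^1(B_L)}\bigl(\|w_n(t)\|_{L^1(B_L)}+\|w(t)\|_{L^1(B_L)}\bigr)\idiff t,
\]
and the bracket is uniformly bounded in $t$ and $n$ by \eqref{eq:uniformintegrability}. Hence this term is at most a constant times $\|w_n-w\|_{L^1((0,T)\times B_L)}\to0$, with no subsequence needed. The second term converges to zero by dominated convergence on $(0,T)\times B_L\times B_L$, using $\delta_n(t)\to0$ for almost every $t$ and the integrable majorant $2C_\zeta|w(t,x)||w(t,y)|$, whose time integral is finite by the uniform bound. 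Since the limit is identified independently of any subsequence, the full sequence converges.
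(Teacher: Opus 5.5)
Your proposal is correct and follows the paper's proof essentially step for step. You split at $B_L\times B_L$, use the uniform kernel bound from the gradient cancellation, handle the near region by the product estimate plus dominated convergence, control the far region by the tail estimate \eqref{eq:bilineartail}, and integrate the same bounds in time. Your lower-semicontinuity argument for the limit's critical bound, including the a.e.-in-time subsequence in the spacetime case, simply spells out what the paper compresses into ``by Fatou's lemma.''
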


\begin{proof}
The uniform critical bounds pass to the limiting
function by Fatou's lemma. Fix a spatial test $\zeta$
and choose $R>0$ such that
$\supp\nabla\zeta\subset B_R$, followed by $L>2R$.
By \eqref{eq:regularizedkernel}, the difference
of test-function gradients cancels the kernel
singularity and gives the uniform bound
\[
 \left|
 K_\delta(x-y)\cdot
 \bigl(\nabla\zeta(x)-\nabla\zeta(y)\bigr)
 \right|
 \leq\frac{\|D^2\zeta\|_\infty}{2\pi}.
\]
The same bound holds with $K$ in place of $K_\delta$.
On $B_L\times B_L$, the difference of scalar
products satisfies
\[
\begin{aligned}
&\iint_{B_L\times B_L}
 |w_n(x)w_n(y)-w(x)w(y)|
 \idiff x\idiff y\\
&\quad\leq
 \bigl(\|w_n\|_{L^1(B_L)}
       +\|w\|_{L^1(B_L)}\bigr)
 \|w_n-w\|_{L^1(B_L)}
 \longrightarrow0.
\end{aligned}
\]
For the fixed limiting product, the convergence
of $K_{\delta_n}$ to $K$ away from the diagonal
and the uniform kernel bound allow passage to
the limit by dominated convergence.
The contribution from outside $B_L\times B_L$
is uniformly bounded by \eqref{eq:bilineartail}
and tends to zero as $L\to\infty$.
This proves the spatial assertion.

For the spacetime assertion, the same product
estimate, integrated in time, gives
\begin{equation}\label{eq:productlimit}
\begin{split}
&\int_0^T\!\iint_{B_L\times B_L}
 |w_n(t,x)w_n(t,y)-w(t,x)w(t,y)|
 \idiff x\idiff y\idiff t\\
&\quad\leq
 \int_0^T
 \bigl(\|w_n(t)\|_{L^1(B_L)}
       +\|w(t)\|_{L^1(B_L)}\bigr)
 \|w_n(t)-w(t)\|_{L^1(B_L)}
 \idiff t
 \longrightarrow0.
\end{split}
\end{equation}
Here the local $L^1$ norms are uniformly bounded
by \eqref{eq:uniformintegrability} and the
common critical bounds.
For the fixed limiting product, the convergence
of $K_{\delta_n(t)}$ to $K$ for almost every
positive time allows dominated convergence
on $(0,T)\times B_L\times B_L$.
The uniform tail estimate \eqref{eq:bilineartail},
integrated over the finite time interval,
controls the remaining contribution by
$C L^{1-\alpha}$.
Letting $L\to\infty$ completes the proof.
\end{proof}

The uniform critical bounds \eqref{eq:selectedbounds},
the scaling identities \eqref{eq:physicalregularization},
and the strong local spacetime convergence following
\eqref{eq:scalarlimit} verify the spacetime hypotheses
of Lemma~\ref{lem:symmetrizedcontinuity}.
Moreover, the smoothing radii in
\eqref{eq:physicalregularization} tend to zero
at every positive time.
Applying Lemma~\ref{lem:symmetrizedcontinuity}
to \eqref{eq:symapprox} and passing to the limit
in \eqref{eq:approxtransport} gives
\begin{equation}\label{eq:weakvorticity}
 \int_0^\infty\!\int_{\R^2}
 \omega\partial_t\zeta\idiff x\idiff t
 +\int_0^\infty
 \mathcal B_{\omega(t)}(\zeta(t))\idiff t
 +\int_{\R^2}H\zeta(0)\idiff x=0.
\end{equation}
This identity is justified without forming
the ordinary limiting vorticity flux.

The limit is a vorticity identity in symmetrized form. To convert it
to velocity we need a velocity identity valid for each frozen scalar
input. Its proof uses auxiliary compact truncations, independently
of the physical transport sequence.
\begin{lemma}[Frozen stress identity]
For\phantomsection\label{lem:stress} every real $w\in\mathcal M^{\qw}$ and compact smooth $\zeta$, with
$v=K*w$, one has
\begin{equation}\label{eq:stressidentity}
 \mathcal B_w(\zeta)=-\int_{\R^2}(v\otimes v):D(\nabla^\perp\zeta)\idiff x.
\end{equation}
No parity assumption on $w$ is required.
\end{lemma}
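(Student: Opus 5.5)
We prove \eqref{eq:stressidentity} first for truncated vorticities, where every integral is classical, and then pass to the limit using the continuity of $\mathcal B$ and the critical velocity bounds.

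\emph{Step 1: smooth compact vorticity.} For $w\in C_c^\infty$ with $v=K*w$, $v$ is smooth, divergence free, with $\curl v=w$ and $|v|=\mathcal O(|x|^{-1})$. The standard identity
\[
w\,v=\operatorname{div}(v\otimes v)^{\perp\text{-form}}
\]
reads componentwise $\operatorname{div}(v\otimes v)=v\cdot\nabla v=\nabla\tfrac{|v|^2}2-w\,v^\perp$, using $\operatorname{div}v=0$. Pairing with $\nabla^\perp\zeta$ kills the gradient term, since $\operatorname{div}\nabla^\perp\zeta=0$. This gives
\[
\int w\,v^\perp\cdot\nabla^\perp\zeta = \int w\,v\cdot\nabla\zeta=-\int (v\otimes v):D(\nabla^\perp\zeta),
\]
with the sign fixed by the conventions \eqref{eq:rotation}. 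The integrations by parts are legitimate because $\zeta$ has compact support. Finally, $\int w\,v\cdot\nabla\zeta=\iint w(x)w(y)K(x-y)\cdot\nabla\zeta(x)$ is absolutely convergent, so symmetrizing in $x\leftrightarrow y$ and using the oddness of $K$ gives $\mathcal B_w(\zeta)$. This proves the identity for smooth compact $w$, with no parity assumption on $w$.

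\emph{Step 2: approximation.} For general real $w\in\mathcal M^{\qw}$, set $w_k=\eta_{1/k}*(w\ind_{B_k})$. Then $w_k\in C_c^\infty$ and $\Mnorm{w_k}{\qw}\leq\Mnorm{w}{\qw}$, by the contraction \eqref{eq:mollificationcontraction} and monotonicity under truncation. Moreover $w_k\to w$ in $L^1_{\loc}$, and also weak-* against $L^{\qwp,1}$: for $\varphi\in L^{\qwp,1}$, approximate $\varphi$ by compactly supported bounded functions and use the uniform bound together with \eqref{eq:lorentzholder}. The spatial part of Lemma~\ref{lem:symmetrizedcontinuity}, applied with $K$ itself (equivalently with $\delta_n\to0$ replaced by no mollification, since the proof only uses the uniform kernel bound and the tail \eqref{eq:bilineartail}), gives $\mathcal B_{w_k}(\zeta)\to\mathcal B_w(\zeta)$. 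On the velocity side, \eqref{eq:velocitycompact} gives $K*w_k\to v$ strongly in $L^p_{\loc}$ for every $p<\qu$. Since $\qu=2/(\alpha-1)>2$, we may take $p=2$. Because $D(\nabla^\perp\zeta)$ is bounded with compact support, it follows that $\int (v_k\otimes v_k):D\nabla^\perp\zeta\to\int(v\otimes v):D\nabla^\perp\zeta$. Passing to the limit in Step 1 yields \eqref{eq:stressidentity}.

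\emph{Main obstacle.} The delicate point is the hypotheses of the limit on the bilinear side. Lemma~\ref{lem:symmetrizedcontinuity} is stated with $K_{\delta_n}$, but its proof applies verbatim with $\delta_n=\infty$ formally replaced by $K$, because of the uniform bound $|K(x-y)\cdot(\nabla\zeta(x)-\nabla\zeta(y))|\leq\|D^2\zeta\|_\infty/(2\pi)$ and the uniform tail \eqref{eq:bilineartail}. The other point requiring care is the integration by parts in Step 1, which requires $v\in L^2_{\loc}$ and enough regularity of $v$; both hold for smooth compact $w$. No decay of $v$ at infinity is needed, since the test function is compactly supported.
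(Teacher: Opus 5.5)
Your proposal is correct and follows essentially the paper's own proof. You verify the identity for the mollified truncations $\eta_{1/k}*(w\ind_{B_k})$ through the smooth vector identity; your Lamb-vector form is equivalent to the paper's $\curl\operatorname{div}(\widehat v_n\otimes\widehat v_n)=\operatorname{div}(\widehat v_n\widehat w_n)$. You then symmetrize using the oddness of $K$, and pass to the limit with the bounded product-set and tail argument of Lemma~\ref{lem:symmetrizedcontinuity} on the bilinear side and the $L^2_{\loc}$ velocity compactness \eqref{eq:velocitycompact} on the stress side, exactly as the paper does.

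One small slip: with $x^\perp=(-x_2,x_1)$ the identity reads $(v\cdot\nabla)v=\nabla\tfrac{|v|^2}{2}+w\,v^\perp$, with a plus sign. It is this sign that actually produces the minus sign in your final display.
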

\begin{proof}
Choose\phantomsection\label{def:frozentruncations}
 a smooth cutoff $0\leq\chi\leq1$, equal to one on $B_1$ and
supported in $B_2$, and put
$\widehat w_n=\eta_{1/n}*(\chi(\cdot/n)w)$. These functions are smooth and
compactly supported, have uniformly bounded $\mathcal M^{\qw}$ norms,
and converge strongly locally in every $L^p$, $p<\qw$. Predual density
also gives weak-* convergence, so Lemma~\ref{lem:velocity} gives
$K*\widehat w_n\to v$ strongly in local $L^2$. For a smooth divergence-free
velocity with curl $\widehat w_n$, the identity
\begin{equation}\label{eq:smoothcurl}
 \curl\operatorname{div}(\widehat v_n\otimes\widehat v_n)
       =\operatorname{div}(\widehat v_n\widehat w_n),\qquad
       \widehat v_n=K*\widehat w_n,
\end{equation}
follows by differentiating components and using $\operatorname{div}\widehat v_n=0$.
Our convention gives
$\langle\curl\widehat v_n,\zeta\rangle=-\int\widehat v_n\cdot\nabla^\perp\zeta$.
Testing \eqref{eq:smoothcurl} and symmetrizing
its right side gives \eqref{eq:stressidentity}
for $\widehat w_n$ with the displayed minus sign.
The bounded product-set argument in the proof of
Lemma~\ref{lem:symmetrizedcontinuity}, together
with the uniform tail estimate
\eqref{eq:bilineartail}, passes the bilinear
form to the limit. The velocity compactness
\eqref{eq:velocitycompact} gives strong local
$L^2$ convergence and therefore permits passage
to the limit in the stress term.
These auxiliary truncations are used only for
the frozen identity, not as Euler approximations.
\end{proof}


Substituting Lemma~\ref{lem:stress} into \eqref{eq:weakvorticity}
and using $\curl u=\omega$ proves the velocity identity for tests
$\nabla^\perp\zeta$, with initial velocity $K*H$. Every smooth compactly
supported divergence-free vector field in the plane has such a
compactly supported streamfunction. Indeed the associated closed
one-form has a global potential. Outside a ball containing the support
that potential is constant, and subtracting that constant makes it
compactly supported. Thus the divergence-free test class is complete.

The identity for divergence-free tests shows that
\[
\partial_tu+\operatorname{div}(u\otimes u)
\]
is a spatial gradient. To recover the full velocity equation, we choose the
canonical pressure in \eqref{eq:pressure} and show that
the remaining residual vanishes.
Since $\qu>2$, the critical velocity bound
\eqref{eq:mainbounds} and the weak-Lorentz product
estimate give a uniform $\mathcal M^{\qu/2}$ bound
for $u\otimes u$. The boundedness of Riesz transforms
on this space then gives the pressure estimate
in \eqref{eq:pressure} and the Poisson identity
\begin{equation}\label{eq:pressurepoisson}
 \Delta P=-\sum_{i,j=1}^2\partial_i\partial_j(u_i u_j).
\end{equation}
For a compact smooth time test\phantomsection\label{def:timetests}
$\vartheta\in C_c^\infty((0,\infty))$, define the residual and its
time average by
\begin{equation}\label{eq:residual}
 \mathcal A=\partial_tu+\operatorname{div}(u\otimes u)+\nabla P,
 \qquad \mathcal A_\vartheta=\langle\mathcal A,\vartheta\rangle_t.
\end{equation}
The first has both divergence and curl zero on $(0,\infty)\times\R^2$.
Thus $\mathcal A_\vartheta$ is an entire harmonic vector
distribution, hence a smooth harmonic vector field.

Choose\phantomsection\label{def:averagingkernel}
 a radial smooth unit-mass kernel $\kappa$ supported in $B_1$
and set $\kappa_R(x)=R^{-2}\kappa(x/R)$. The mean-value property
gives $\mathcal A_\vartheta=\mathcal A_\vartheta*\kappa_R$.
Moving derivatives to the kernel and using Lorentz H\"older \eqref{eq:lorentzholder} yields,
at every fixed spatial point,
\begin{equation}\label{eq:harmonicdecay}
 |\mathcal A_\vartheta(x)|
 \leq C_{\alpha,\kappa}
 \left(\|\vartheta'\|_1M_H R^{-2/\qu}
       +\|\vartheta\|_1M_H^2R^{-1-4/\qu}\right).
\end{equation}
The powers follow from
$\|\kappa_R\|_{L^{\qu',1}}
=R^{-2/\qu}\|\kappa\|_{L^{\qu',1}}$ and
$\|\nabla\kappa_R\|_{L^{(\qu/2)',1}}
=R^{-1-4/\qu}\|\nabla\kappa\|_{L^{(\qu/2)',1}}$.
Both terms vanish as $R\to\infty$, so $\mathcal A_\vartheta=0$.
Since $\vartheta$ was arbitrary, $\mathcal A=0$.
This argument eliminates the entire harmonic residual, including
spatial constants, without parity. All curl and divergence identities
hold on the whole plane, so there is no omitted point force.
The initial formulation follows from the traces proved next.

\subsection{Traces, conservation, and further properties}\label{subsec:class}
The interior velocity equation has been established independently of
its initial trace. We now use the closed-time transport to prove the
two strong continuity assertions, then recover the initial term and
complete renormalization and conservation.

Fix $s\geq0$. The two-time flow law from
Proposition~\ref{prop:flow} and 
the transport identity
\eqref{eq:exacttransport} give
\begin{equation}\label{eq:shorttransport}
 \omega(t)=\omega(s)\circ X_u(s,t,\cdot).
\end{equation}
The displacement estimate \eqref{eq:displacement}
shows that $X_u(s,t,\cdot)$ converges locally in
measure to the identity as $t\to s$, in either
time direction. The escape estimate \eqref{eq:escape}
also verifies the tightness condition
\eqref{eq:maptightness}. Lemma~\ref{lem:composition},
applied with $f=\omega(s)$, now gives strong local
$L^p$ continuity for every $1\leq p<\qw$,
including at $s=0$.
The global weak norm stays equal to $M_H$ by exact transport.

For velocity continuity, fix an observation ball $B_R$ and choose
$1<\ell<\qw$. On $B_L$, the scalar difference
$\omega(t)-\omega(s)$ tends to zero in $L^\ell$. Note that
\[
\left\|
K*\bigl[(\omega(t)-\omega(s))\mathbf1_{B_L}\bigr]
\right\|_{L^{2\ell/(2-\ell)}(\mathbb R^2)}
\leq
C_\ell
\|\omega(t)-\omega(s)\|_{L^\ell(B_L)}
\longrightarrow0.
\]
Since $2\ell/(2-\ell)>2$, it therefore tends to zero in $L^2(B_R)$.
For $L>2R$, the remaining velocity tail is uniformly bounded on $B_R$
by $C_\alpha M_H L^{1-\alpha}$, using \eqref{eq:tail}.
Taking $t\to s$ first and then $L\to\infty$
proves the velocity continuity assertion in
\eqref{eq:traces}, including the initial trace
$u(0)=K*H$. Since $\qu>2$, the bounds
\eqref{eq:mainbounds} and \eqref{eq:pressure},
together with \eqref{eq:uniformintegrability},
give uniform local $L^1$ bounds for the stress
and pressure up to time zero. A temporal cutoff
in the interior velocity identity, combined with
the strong velocity trace, then yields
\eqref{eq:velocity} for arbitrary tests meeting $t=0$.

For bounded $\beta$ as in the theorem, the exact transport identity  gives
$\beta(\omega(t))=\beta(H)\circ X_u(0,t,\cdot)$.
Since $\beta$ is bounded and $u$ is locally
integrable, the renormalized flux
$u\beta(\omega)$ belongs to $L^1_{\loc}$,
even when the ordinary flux $u\omega$ does not.
Differentiate a compact test along the absolutely continuous particle
paths and integrate over initial labels. The resulting integrals are
absolutely convergent because $\beta$ is bounded, $u$ is locally
spacetime $L^1$, and the flow preserves measure. In particular,
the velocity term is bounded by
$\|\beta\|_\infty\int|u\cdot\nabla\zeta|\idiff x\idiff t$.
Fubini therefore gives \eqref{eq:renormalization}, including its
initial term. Its local strong trace also follows from the Lipschitz
continuity of $\beta$ and \eqref{eq:traces}.
Finally, invertible measure preservation proves exact signed
equimeasurability and change of variables for every absolutely
integrable $\mathcal F(H)$. Lemma~\ref{lem:distribution} supplies the constants
in \eqref{eq:signeddistribution}. This proves Theorem~\ref{thm:main}.

With Theorem~\ref{thm:main} proved, we record several structural
consequences of the same construction. These concern the profile
velocity equation, optional symmetries preserved by the fixed-point
scheme, and examples illustrating the scope of the angular datum class.

The\phantomsection\label{def:profilepressure}
pressure is self-similar as well. Riesz transforms commute with
dilation, so if
$\Pi=\sum_{i,j}\Riesz_i\Riesz_j(U_iU_j)$, then
\begin{equation}\label{eq:profilesystem}
 \begin{gathered}
 P(t,x)=t^{2/\alpha-2}\Pi(t^{-1/\alpha}x),\\
 (1/\alpha-1)U-\alpha^{-1}(y\cdot\nabla)U
       +\operatorname{div}(U\otimes U)+\nabla\Pi=0,
 \qquad \operatorname{div}U=0.
 \end{gathered}
\end{equation}
The profile equation is distributional. It follows by the change of
variables in the velocity identity, rather than by an assumed
untruncated profile-vorticity equation.

The fixed-point construction is compatible with orthogonal symmetries
that preserve the datum with the appropriate pseudoscalar covariance.

\begin{proposition}[Optional symmetries]
Let\phantomsection\label{prop:symmetry} $\mathcal G$ be a subgroup of the orthogonal group $\mathrm O(2)$ such that
$H(Ox)=\det(O)H(x)$ almost everywhere for every $O\in\mathcal G$.
The profile may be chosen with
\begin{equation}\label{eq:symmetry}
 \Omega(Oy)=\det(O)\Omega(y),\qquad U(Oy)=OU(y),
 \qquad O\in\mathcal G.
\end{equation}
The corresponding physical fields and flow have the same covariance,
with almost-everywhere identities for each fixed group element and time.
\end{proposition}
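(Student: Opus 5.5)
We plan to run the Schauder--Tychonoff argument of Subsection~\ref{subsec:fixed} on the symmetric subdomain
\[
 \mathcal C_H^{\mathcal G}=\{w\in\mathcal C_H:\ w(O\cdot)=\det(O)\,w\ \text{a.e. for every }O\in\mathcal G\}.
\]
This set is convex and contains zero. It is also weak-* closed, because for $\varphi\in L^{\qwp,1}$ one has $\int w(Ox)\varphi(x)\idiff x=\int w(y)\varphi(O^{-1}y)\idiff y$, and $\varphi\circ O^{-1}$ stays in $L^{\qwp,1}$ since $O$ preserves Lebesgue measure. Hence $\mathcal C_H^{\mathcal G}$ is compact and metrizable by Lemma~\ref{lem:ball}. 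Continuity of $\mathcal T_\varepsilon$ is inherited from the full ball. It therefore suffices to show that $\mathcal T_\varepsilon$ maps $\mathcal C_H^{\mathcal G}$ into itself. The sequential weak-* limit extraction and the identification $\Omega=H\circ L_U$ then go through verbatim, with the limit staying in the closed set $\mathcal C_H^{\mathcal G}$.

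\textbf{Kernel covariance.} First we check the covariance of the Biot--Savart kernel: $K(Oz)=\det(O)\,OK(z)$. Indeed $(Oz)^\perp=\det(O)\,Oz^\perp$, since $O$ commutes with the rotation $J$ when $\det O=1$ and anticommutes with it when $\det O=-1$. Changing variables $y=Oy'$ in $K*w$ for $w\in\mathcal C_H^{\mathcal G}$ gives
\[
 (K*w)(Ox)=\int K(O(x-y'))\,w(Oy')\idiff y'=\det(O)^2\,O(K*w)(x)=O(K*w)(x).
\]
The mollifier $\eta_\varepsilon$ is radial, so $U_{\varepsilon,w}(Ox)=OU_{\varepsilon,w}(x)$ as well. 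Consequently the physical drift satisfies $b_U(t,Ox)=Ob_U(t,x)$.

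\textbf{Flow covariance.} If $X(t,s,x)$ is the regular Lagrangian flow of $b_U$, then $O^{-1}X(t,s,Ox)$ solves the same integral ODE \eqref{eq:rlf} with the same initial label. Its trajectories are absolutely continuous on the closed interval. It is measure preserving, because $O$ preserves Lebesgue measure. The uniqueness among regular flows in Proposition~\ref{prop:flow} then gives
\[
 X(t,s,Ox)=OX(t,s,x)\quad\text{for a.e. }x\text{ and each fixed }s,t.
\]
At the endpoint this reads $L_U(Oy)=OL_U(y)$ almost everywhere. Therefore
\[
 \mathcal T_\varepsilon(w)(Oy)=H(OL_U(y))=\det(O)\,H(L_U(y))=\det(O)\,\mathcal T_\varepsilon(w)(y)
\]
almost everywhere. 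This is the self-map property. The same computation applied to the limit profile yields the full relation \eqref{eq:symmetry}.

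\textbf{Physical fields.} The covariance transfers to the physical fields through the scaling \eqref{eq:selfsimilar}, since dilations commute with $O$. For the flow, the same uniqueness argument applied to $b_U$ on every finite interval gives the identity almost everywhere for each fixed group element and time.

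\textbf{Main obstacle.} The only delicate point is the null-set bookkeeping when $\mathcal G$ is uncountable, for example $\mathrm{SO}(2)$. The constraint defining $\mathcal C_H^{\mathcal G}$ is an intersection of weak-* closed conditions indexed by $O$, and closedness is checked one group element at a time. Each fixed $O$ yields its own almost-everywhere statement, which is all the proposition claims, so no simultaneous null set is needed.
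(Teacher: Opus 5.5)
Your proposal is correct and follows the paper's proof essentially step for step. You restrict the Schauder--Tychonoff argument to the weak-* closed convex symmetric subdomain of $\mathcal C_H$, use $K(Oz)=\det(O)\,OK(z)$ and radial mollification to obtain $U(Oy)=OU(y)$, invoke uniqueness of the prescribed-drift regular Lagrangian flow to get $L_U(Oy)=OL_U(y)$ and hence the self-map property, and keep the limit $\varepsilon_n\downarrow0$ in the invariant domain by closedness. The extra details you supply, namely the conjugated flow $O^{-1}X(t,s,O\cdot)$ solving the same integral ODE, closedness via the pairing $\int w(y)\varphi(O^{-1}y)\idiff y$, and the fixed-$O$ null-set bookkeeping, fill in points the paper leaves implicit without changing the approach.
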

\begin{proof}
The conditions $w(Ox)=\det(O)w(x)$ are weak-* closed linear conditions.
Their intersection with $\mathcal C_H$ is compact, convex, and contains
zero. Since $(Ox)^\perp=\det(O)O x^\perp$, one has
$K(Oz)=\det(O)OK(z)$. A change of variables then gives
$(K*w)(Ox)=O(K*w)(x)$ for such a $w$. Radial mollification preserves
this vector covariance, and uniqueness of the prescribed-drift flow
gives $L_U(Ox)=OL_U(x)$. Hence $H\circ L_U$ has the required signed
covariance, and the entire fixed-point and limiting construction stays
in this invariant domain. The same uniqueness gives physical flow
covariance. No nonlinear solutions have been averaged.
\end{proof}

The proof applies, in particular, to $h=1$, $h=\cos\theta$,
$h=1+\cos\theta$, bounded angular jumps, and all trigonometric
polynomials. It also applies to an angular cusp equal to
$|\theta|^{-\alpha/4}$ near zero and bounded elsewhere, since its
$\qw$th power is locally $|\theta|^{-1/2}$. No step differentiates $h$.
By contrast, a cusp $|\theta|^{-\alpha/2}$ has divergent critical
angular integral and does not belong to the planar weak class.

The constant angular datum also tests the nonlinear formulation.
The stationary solution in Appendix~\ref{app:comparison},
\eqref{eq:radialexample}, has $u\omega\notin L^1_{\loc}$ whenever
$\alpha\geq3/2$, including the endpoint, while its velocity equation
is unforced on the whole plane. The need for symmetrization therefore
occurs within the theorem's datum class.

\par\bigskip
\noindent\textbf{Acknowledgments.}
This research was supported in part by the National Science Foundation under Grant No. DMS-2508463. The authors also gratefully acknowledge the generous support of the Choi Family Gift Fund and the Mike Yan Gift Fund.
The authors are grateful to Hyungjun Choi for his helpful comments
on the manuscript.
His insights into the connection to regularized Euler equations
and his explanations of the renormalization and Lagrangian properties
of the solutions constructed in \cite{Choi2026}
have been particularly illuminating.

\appendix
\section{Construction and stability of the closed-time flow}\label{app:flow}
This appendix establishes Proposition~\ref{prop:flow} directly in the
global weak-Lorentz setting. The argument uses the Sobolev pointwise
maximal-function inequality and the logarithmic-distance method of
\cite[Lemma~A.3, Theorem~3.8, and Corollaries~3.7 and~3.9]{CrippaDeLellis2008}.

Assume first that $b$ is smooth, bounded, divergence-free, and
satisfies \eqref{eq:flowbounds}. Its classical flow is a
measure-preserving diffeomorphism. Integrating the speed along
particles starting in a set $E$ of finite measure gives
\begin{equation}\label{eq:flowintegral}
 \begin{split}
 \int_E\sup_{t'\in I_{s,t}}|X_b(t',s,x)-x|\idiff x
 \leq\int_{I_{s,t}}\int_{X_b(t',s,E)}|b(t',y)|\idiff y\idiff t'
 \leq C_v|t-s|\,|E|^{1-1/\qu}.
 \end{split}
\end{equation}
Taking the supremum over $E$ yields the displacement bound
\eqref{eq:displacement} in the smooth case. A trajectory starting
in $B_R$ and leaving $B_L$ must travel at least $L-R$.
Chebyshev's inequality therefore gives the smooth escape bound
\begin{equation}\label{eq:escape0}
 |\{x\in B_R:
 \sup_{t'\in I_{s,t}}|X_b(t',s,x)|>L\}|
 \leq\left(\frac{C_v|t-s|}{L-R}\right)^{\qu}.
\end{equation}
Both bounds apply in either time direction. For smooth flows,
invertibility and measure preservation turn the backward escape
bound into the following estimate for particles arriving from infinity,
\begin{equation}\label{eq:inverseescape}
 \int_{|x|>L}\ind_{B_R}(X_b(t,s,x))\idiff x
 =|\{y\in B_R:|X_b(s,t,y)|>L\}|
 \leq\left(\frac{C_vT}{L-R}\right)^{\qu}.
\end{equation}

Extend $b$ by zero outside $[0,T]$ and mollify in time and
space with nonnegative unit-mass kernels. For each $n$, the
approximating field $b^{(n)}$ is smooth, globally bounded, and
has bounded spatial derivatives. It remains divergence-free and
satisfies the common bounds \eqref{eq:flowbounds}. Indeed,
spatial convolution is contractive by
\eqref{eq:mollificationcontraction}, and temporal convolution
preserves the essential supremum of the Marcinkiewicz norms.
Moreover, $b^{(n)}\to b$ in
$L^1_{\loc}([0,T]\times\R^2)$. Write $X^{(n)}=X_{b^{(n)}}$.
The\phantomsection\label{def:maximal}
Hardy--Littlewood maximal operator $\mathsf M$ gives
$G_n=\mathsf M|Db^{(n)}|$, with
$\Mnorm{G_n(t)}{\qw}\leq C_\alpha C_\nabla$.
The Sobolev pointwise inequality takes the form
\begin{equation}\label{eq:sobolevdifference}
 |b^{(n)}(t,x)-b^{(n)}(t,y)|
 \leq C_{\mathrm{Sob}}|x-y|(G_n(t,x)+G_n(t,y)),
\end{equation}
where $C_{\mathrm{Sob}}$ depends only on the dimension.

Fix\phantomsection\label{def:logparameters}
labels in $B_R$ and discard those for which either smooth trajectory
leaves $B_L$, where $L>R$. The measure of the discarded labels is
at most twice the bound in \eqref{eq:escape0}. On the remaining
labels, the chain rule and \eqref{eq:sobolevdifference} give,
for every $\sigma>0$ and almost every $t\in[0,T]$,
\[
\begin{aligned}
&\frac{\mathrm d}{\mathrm dt}
 \ln\left(1+
 \frac{|X^{(n)}(t,0,x)-X^{(m)}(t,0,x)|}{\sigma}
 \right)\\
&\leq C_{\mathrm{Sob}}
 \bigl(G_n(t,X^{(n)}(t,0,x))
       +G_n(t,X^{(m)}(t,0,x))\bigr)+
 \frac{|b^{(n)}-b^{(m)}|
       (t,X^{(m)}(t,0,x))}{\sigma}
\end{aligned}
\]
for almost every $t\in[0,T]$.
Since both flows preserve Lebesgue measure and the retained
trajectories remain in $B_L$, the maximal-function terms
are controlled by
\[
 \int_0^T\int_{B_L}G_n(t,y)\idiff y\idiff t
 \leq C_\alpha TC_\nabla|B_L|^{1-1/\qw}.
\]
The difference-field term is bounded by its
$L^1((0,T)\times B_L)$ norm divided by $\sigma$.
Both trajectories start at the same label $x$.
Integrating in time and labels, taking the supremum in time,
and applying Chebyshev's inequality therefore give
\begin{equation}\label{eq:logestimate}
 \begin{split}
 &|\{x\in B_R:
 \sup_{0\leq t\leq T}
 |X^{(n)}(t,0,x)-X^{(m)}(t,0,x)|>\gamma\}|\\
 &\quad\leq
 2\left(\frac{C_vT}{L-R}\right)^{\qu}
 +\frac{C_\alpha TC_\nabla|B_L|^{1-1/\qw}
       +\|b^{(n)}-b^{(m)}\|_{L^1((0,T)\times B_L)}/\sigma}
       {\ln(1+\gamma/\sigma)}
 \end{split}
\end{equation}
for every $\gamma>0$.
For each fixed $L$, choose
\(
 \sigma=
 \left(
 \|b^{(n)}-b^{(m)}\|_{L^1((0,T)\times B_L)}
 +1/{n^2}+1/{m^2}
 \right)^{1/2}.
\)
Since $b^{(n)}\to b$ in local spacetime $L^1$,
both $\sigma$ and
\(
 \sigma^{-1}\|b^{(n)}-b^{(m)}\|_{L^1((0,T)\times B_L)}
\)
tend to zero as $n,m\to\infty$. The second term in
\eqref{eq:logestimate} therefore vanishes for fixed $L$.
Letting $L\to\infty$ then removes the escape term.
Thus the flows are Cauchy locally in measure with values in
$C([0,T];\R^2)$. Along a diagonal subsequence, the trajectories
converge for almost every initial label, uniformly in time.
The argument applies on any interval with a fixed starting time,
and time reversal gives the corresponding backward limits.
Fatou's lemma passes the smooth displacement bound derived from
\eqref{eq:flowintegral} to the limiting trajectories. In particular,
\eqref{eq:displacement} and \eqref{eq:escape} hold for the limiting
forward and backward maps.

To establish measure preservation for the limiting maps, fix two times $s,t$ and a nonnegative
$\varphi\in C_c(\R^2)$ supported in $B_R$.
For the smooth approximating flows, measure preservation
and \eqref{eq:inverseescape} give, for $L>R$,
\[
\begin{aligned}
&\left|
\int_{\R^2}\varphi(y)\idiff y
-
\int_{B_L}
\varphi(X^{(n)}(t,s,x))\idiff x
\right|\leq
\|\varphi\|_\infty
\left(\frac{C_vT}{L-R}\right)^{\qu}.
\end{aligned}
\]
For fixed $L$, local convergence in measure
allows passage to the limit in the integral over $B_L$.
The same tail bound passes to the limiting map
by Fatou's lemma. Letting $L\to\infty$ then gives
\[
\int_{\R^2}
\varphi(X_b(t,s,x))\idiff x
=
\int_{\R^2}\varphi(y)\idiff y.
\]
Thus the limiting maps preserve Lebesgue measure.
The same argument applies to the backward limits.

The integral ODE follows by passing to the limit in the
composed velocities. Fix $R>0$ and $L>R$ and discard labels in
$B_R$ for which either the approximating or the limiting path
leaves $B_L$. Their measure is bounded by \(
 2\big((C_vT)/(L-R)\big)^{\qu}.
\)
On the remaining labels, the difference
between the composed velocities 
$|b^{(n)}(t,X^{(n)}(t,0,x))-b(t,X_b(t,0,x))|$ is bounded by
\[
\begin{aligned}
 |(b^{(n)}-b)(t,X^{(n)}(t,0,x))|+
 |b(t,X^{(n)}(t,0,x))
       -b(t,X_b(t,0,x))|.
\end{aligned}
\]
Measure preservation bounds the integral of the first term
by $\|b^{(n)}-b\|_{L^1((0,T)\times B_L)}$, which tends to zero.
For the second term, approximate $b$ in
$L^1((0,T)\times B_L)$ by a smooth bounded vector field.
Measure preservation controls the two errors after composition,
while the convergence of trajectories in measure handles the
smooth approximation. Sending its approximation error to zero
proves convergence of the second term on the retained labels.
The discarded contributions are controlled uniformly. For every
finite-measure set $E$ of initial labels, measure preservation
and \eqref{eq:marcinkiewicz} give
\begin{equation}\label{eq:discardedvelocity}
 \int_E\int_0^T|b^{(n)}(t,X^{(n)}(t,0,x))|\idiff t\idiff x
 \leq C_vT|E|^{1-1/\qu}.
\end{equation}
The same estimate holds for the limiting map because the
pushforward of the restricted label measure has density at most
one and total mass $|E|$. By the escape estimates, the measure
of the discarded labels tends to zero uniformly as $L\to\infty$.
Consequently, the composed velocities converge in
$L^1((0,T)\times B_R)$. Passing to the limit in the smooth
integral equations proves \eqref{eq:rlf} for the limiting
trajectories, including absolute continuity through both endpoints.

Almost-everywhere invertibility follows by passing the smooth
inverse identities to the limit. For every fixed $s,t$, the
smooth flows satisfy
 $X^{(n)}(s,t,X^{(n)}(t,s,x))=x$. 
For $L>R$ and $\gamma>0$, measure preservation gives
\[
\begin{aligned}
&\bigl|\{x\in B_R:
 |X^{(n)}(s,t,X^{(n)}(t,s,x))
 -X_b(s,t,X^{(n)}(t,s,x))|>\gamma\}\bigr|\\
&\quad\leq
 |\{x\in B_R:|X^{(n)}(t,s,x)|>L\}|+
 |\{y\in B_L:
 |X^{(n)}(s,t,y)-X_b(s,t,y)|>\gamma\}|.
\end{aligned}
\]
The first term is uniformly small for large $L$ by
\eqref{eq:escape0}. For fixed $L$, the second tends to zero
by local convergence in measure of the backward maps.
It remains to compare
$X_b(s,t,X^{(n)}(t,s,x))$ with
$X_b(s,t,X_b(t,s,x))$.
For the remaining comparison, approximate the measurable map
$X_b(s,t,\cdot)$ in measure on $B_L$ by a bounded uniformly
continuous map. Measure preservation controls the preimages of
the approximation-error sets under both
$X^{(n)}(t,s,\cdot)$ and $X_b(t,s,\cdot)$.
Local convergence in measure of the forward maps handles the
continuous approximation, and \eqref{eq:escape0} and
\eqref{eq:escape} control intermediate images outside $B_L$.
Taking $n\to\infty$, then reducing the approximation error,
and finally letting $L\to\infty$ proves convergence of the
composed maps in local measure. Passing to the limit yields the inverse identity
$X_b(s,t,X_b(t,s,x))=x$
for almost every $x$. Interchanging $s$ and $t$ gives the
other inverse identity.

Uniqueness holds among regular Lagrangian flows with bounded
compression. For the rough field $b$, the pointwise Sobolev
inequality underlying \eqref{eq:sobolevdifference} holds outside
spatial null sets at almost every time. Bounded compression and
Fubini's theorem justify its evaluation along almost every
trajectory. The logarithmic-distance argument leading to
\eqref{eq:logestimate} therefore applies to two flows of the
same field, with no difference-field term. For fixed $L$,
letting $\sigma\downarrow0$ removes the logarithmic term.
The required escape control follows from the rearrangement
bound \eqref{eq:marcinkiewicz} applied to the restricted
pushforward measures, whose densities are bounded by the
compression constants. Letting $L\to\infty$ then proves
uniqueness.

The construction applies to every fixed starting time.
Concatenation of trajectories and uniqueness give the
two-time flow law
$X_b(t,s)\circ X_b(s,r)=X_b(t,r)$
for each fixed choice of $r,s,t$ and almost every $x$.
A jointly measurable two-time representative follows from the
standard Sobolev regular Lagrangian flow theory under the local
Sobolev and growth conditions verified using
\eqref{eq:flowbounds} and \eqref{eq:growth}.
Uniqueness identifies this representative with the fixed-time
limits constructed above. The map identities hold for fixed
times modulo spatial null sets, as in Theorem~\ref{thm:main}.

Stability follows from the same logarithmic-distance argument.
For any fields $b^{(n)}\to b$ in local spacetime $L^1$ with
common bounds \eqref{eq:flowbounds}, the difference-field
contribution is bounded by
$\|b^{(n)}-b\|_{L^1((0,T)\times B_L)}/\sigma$.
As in the Cauchy argument, for fixed $L$ one can choose
$\sigma>0$ tending to zero while this quotient also tends to
zero. The logarithmic term then vanishes, and sending
$L\to\infty$ controls escape. This proves local convergence
in measure of the trajectories uniformly on closed time
intervals. The time-reversed argument gives the corresponding
backward stability, including both endpoints.

\section{Angular velocity, comparison, and a stationary example}\label{app:comparison}
The main construction uses canonical convolution without resolving
angular modes. Here an angular stream equation identifies the initial
velocity and permits a precise comparison with velocity-based existence
results. We finish with the stationary example used to test the
ordinary vorticity flux.

\subsection{Angular velocity and datum classes}
{}
Use the\phantomsection\label{def:polarbasis} polar basis
$e_\rho=(\cos\theta,\sin\theta)$ and
$e_\theta=e_\rho^\perp=(-\sin\theta,\cos\theta)$.

\begin{proposition}
For\phantomsection\label{prop:angularvelocity} $h\in L^{\qw}(\mathbb S^1)$, there is a unique periodic
$g\in W^{2,\qw}(\mathbb S^1)$ solving
\begin{equation}\label{eq:angularstream}
 g''+(2-\alpha)^2g=h.
\end{equation}
The canonical initial velocity has angular $W^{1,\qw}$ regularity and is
\begin{equation}\label{eq:initialvelocity}
 \begin{split}
 \Psi_H(\rho,\theta)&=\rho^{2-\alpha}g(\theta),\\
 K*H=\nabla^\perp\Psi_H
 &=\rho^{1-\alpha}
       \bigl(-g'(\theta)e_\rho+(2-\alpha)g(\theta)e_\theta\bigr).
 \end{split}
\end{equation}
Every $C^1$ angular divergence-free velocity of degree $1-\alpha$
equals the canonical velocity of its homogeneous curl and is included
among these data.
\end{proposition}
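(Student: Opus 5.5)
The plan is to solve the angular ODE by Fourier series, build the homogeneous stream function $\Psi_H$, and then identify $\nabla^\perp\Psi_H$ with the canonical velocity $K*H$ through a Liouville argument.

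\emph{Solving the ODE.} Expand $h=\sum_k\hat h_k e^{ik\theta}$ and set $\hat g_k=\hat h_k/((2-\alpha)^2-k^2)$. Since $0<2-\alpha<1$, the symbol $(2-\alpha)^2-k^2$ never vanishes on $\mathbb Z$, so the periodic operator $g''+(2-\alpha)^2g$ has trivial kernel. Its periodic Green's function is explicit:
\[
\frac{\cos\bigl((2-\alpha)(|\theta|-\pi)\bigr)}{2(2-\alpha)\sin((2-\alpha)\pi)},
\]
which is Lipschitz with a bounded second distributional derivative off the single delta. Convolving with this kernel gives a unique $g$ with $g\in W^{1,\infty}$. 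The identity $g''=h-(2-\alpha)^2g$ then gives $g''\in L^{\qw}$, so $g\in W^{2,\qw}(\mathbb S^1)$.

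\emph{The stream function.} Set $\Psi_H=\rho^{2-\alpha}g(\theta)$. The polar Laplacian gives
\[
\Delta\Psi_H=\rho^{-\alpha}\bigl((2-\alpha)^2g+g''\bigr)=H
\]
away from the origin. Since $\nabla\Psi_H=O(\rho^{1-\alpha})$ is locally integrable and $1-\alpha>-1$, no delta mass appears at $0$. Hence the identity holds in $\mathcal D'(\R^2)$ and $\curl\nabla^\perp\Psi_H=H$. Differentiating in polar coordinates gives exactly the displayed formula \eqref{eq:initialvelocity}, with $\nabla^\perp=\rho^{-1}\partial_\theta(\cdot)\,(-e_\rho)+\partial_\rho(\cdot)\,e_\theta$ under the paper's sign convention. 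Angular $W^{1,\qw}$ regularity of the velocity follows from $g\in W^{2,\qw}$.

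\emph{Identification with $K*H$.} Put $D=\nabla^\perp\Psi_H-K*H$. It is divergence free with curl zero on the whole plane, by Lemma~\ref{lem:velocity} and the computation above. Hence $D$ is harmonic, and this is the step I expect to be the main obstacle. Both terms lie in $\mathcal M^{\qu}$. For the first, $|\nabla\Psi_H|\le C\rho^{1-\alpha}$ with $g,g'$ bounded, and $\rho^{1-\alpha}$ lies in $L^{\qu,\infty}$ exactly. For the second, this is Lemma~\ref{lem:velocity}. The mean-value argument behind \eqref{eq:harmonicdecay} then gives
\[
|D(x)|\le C\|D\|_{\mathcal M^{\qu}}R^{-2/\qu}\to0 \quad\text{as } R\to\infty,
\]
so $D=0$. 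Uniqueness of $g$ follows from the trivial periodic kernel.

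\emph{Inclusion statement.} Let $V$ be $C^1$ away from $0$, divergence free, and homogeneous of degree $1-\alpha$. Its curl $\rho^{-\alpha}h$ has continuous $h$, so $h\in L^{\qw}$ and $H$ belongs to our datum class. Moreover $V\in\mathcal M^{\qu}$, and its curl and divergence computed in $\mathcal D'(\R^2)$ carry no point contributions. This holds because the circulation and flux of $V$ around a circle of radius $\rho$ scale like $\rho^{2-\alpha}\to0$. The same harmonic/Liouville argument then shows $V=K*H$.
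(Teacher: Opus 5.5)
Your proposal is correct and follows essentially the paper's route. Both arguments use the nonvanishing of $(2-\alpha)^2-k^2$ to invert the periodic ODE, the polar Laplacian with boundary terms of order $\rho_*^{2-\alpha}$ vanishing at the origin, and the mean-value Liouville argument in $\mathcal M^{\qu}$, both to identify $\nabla^\perp\Psi_H$ with $K*H$ and for the $C^1$ converse; the one difference is that you justify the $L^{\qw}\to W^{2,\qw}$ inversion through the explicit periodic Green's function, whereas the paper simply asserts this mapping property and invokes one-dimensional Sobolev embedding.
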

\begin{proof}
The number $2-\alpha$ lies strictly between zero and one, so none of
the eigenvalues $(2-\alpha)^2-k^2$ vanishes, where the
Fourier index\phantomsection\label{def:fourierindex} $k$ ranges over $\mathbb Z$.
The periodic second-order ODE therefore has a unique inverse, taking
$L^{\qw}$ to $W^{2,\qw}$. One-dimensional
Sobolev embedding makes $g$ and $g'$ bounded and continuous.
Polar differentiation gives \eqref{eq:initialvelocity} and
$\Delta\Psi_H=H$ away from zero. In integration by parts across the
circle\phantomsection\label{def:originradius}
 of radius $\rho_*>0$, the boundary terms against a fixed smooth test are estimated as
\[
\left|
\int_{\partial B_\rho}
\varphi\,\partial_r\Psi_H\,dS
\right|
=O(\rho^{2-\alpha}),\quad\text{and}\quad
\left|
\int_{\partial B_\rho}
\Psi_H\,\partial_r\varphi\,dS
\right|
=O(\rho^{3-\alpha}).
\]
They vanish as $\rho_*\downarrow0$. Therefore $\Delta\Psi_H=H$ holds on the whole plane in distributions, with no additional distribution supported at the origin.
The velocity in \eqref{eq:initialvelocity} belongs to
$\mathcal M^{\qu}$, since its angular factor is bounded and its degree
is $1-\alpha$. Its difference from $K*H$ is an entire harmonic vector
field in that same weak class. The mean-value argument used in
\eqref{eq:harmonicdecay}, together with
the Lorentz--H\"older estimate \eqref{eq:lorentzholder},
gives the difference an upper bound
\(C_{\alpha,\kappa}
 \Mnorm{\cdot}{\qu}R^{-2/\qu}
\)
for every $R>0$. Letting $R\to\infty$ makes it zero and identifies the
velocity in \eqref{eq:initialvelocity} with $K*H$.

Conversely, the curl of a $C^1$ angular homogeneous velocity has a
continuous angular vorticity $h$, hence lies in $L^{\qw}$ on the
circle. Integration by parts at the origin has vanishing boundary
terms of order $\rho_*^{2-\alpha}$, and the same harmonic uniqueness
identifies its velocity with $K*H$. A discontinuous angular $h$, or
the integrable cusp in Subsection~\ref{subsec:class}, cannot be the curl
of a velocity with a $C^1$ angular trace. The inclusion of the
stated $C^1$ datum class is therefore strict.
\end{proof}

\subsection{Basic weak existence from Choi's estimates}
The strict inclusion of the stated hypotheses
does not mean that rough-data weak Euler existence
is exclusive to our construction. In version~2 of
\cite{Choi2026}, Proposition~2.8 gives the critical profile bounds
in terms of the critical datum norm alone, and the velocity trace
argument in Lemma~4.1 is uniform under these bounds. Here is the
resulting approximation argument.
Choose\phantomsection\label{def:choiapproximation}
 smooth $h_n\to h$ in
$L^{\qw}(\mathbb S^1)$ and form the corresponding $H_n$. By
\eqref{eq:exactnorm}, $H_n\to H$ in $\mathcal M^{\qw}$, so their
canonical velocities converge in $\mathcal M^{\qu}$. Apply Choi's
theorem to these smooth angular velocities, and denote the resulting
physical velocities by $u_n$. The resulting profiles
have common critical velocity and derivative bounds. Local Rellich
compactness gives a subsequence converging strongly in $L^2_{\loc}$.
Their self-similar scalings converge strongly locally in spacetime
$L^2$ away from zero, and hence their stresses converge in local
$L^1$. The uniform weak velocity bound makes the local $L^2$ mass
of the time strip $(0,t_*)$ uniformly $\mathcal O(t_*)$, allowing the full
weak velocity identity and its initial term to pass to the limit.

More explicitly, for each compact smooth divergence-free vector test
$\boldsymbol\zeta$, the velocity equation gives
\begin{equation}\label{eq:choitrace}
 |\langle u_n(t)-K*H_n,\boldsymbol\zeta\rangle|
 \leq C_{\alpha,M_H,\boldsymbol\zeta}t.
\end{equation}
One may enlarge the constant to cover the bounded approximating
datum norms. Thus every distributional initial subsequence limit of
the limiting velocity has the same curl as $K*H$ and zero divergence.
Both belong to the global weak velocity class, so harmonic uniqueness
identifies that limit with $K*H$. Uniform local $W^{1,\ell}$ bounds,
$1<\ell<\qw$, give relative compactness in local $L^2$ and upgrade
the initial convergence to strong local $L^2$. Positive-time
continuity follows from the fixed profile and continuity of dilation
on local $L^2$. This proves basic rough-data self-similar existence from Choi's
quantitative estimates as well. The velocity compactness
argument above does not by itself prove renormalization.
If the smooth-data Euler solutions are taken to be the
renormalized limits discussed in the introduction, however,
their vorticities $\omega_n=\curl u_n$ satisfy
\[
 \omega_n(t)=H_n\circ X_{u_n}(0,t,\cdot).
\]
Let $u$ be the limiting velocity. The common critical bounds
and local spacetime convergence of $u_n$ allow us to apply
Proposition~\ref{prop:flow}. Since $H_n\to H$ in
$\mathcal M^{\qw}$, measure preservation, the local estimate
\eqref{eq:uniformintegrability}, and
Lemma~\ref{lem:composition} then give
\[
 \omega_n(t)\longrightarrow H\circ X_u(0,t,\cdot)
 \quad\hbox{in }L^p_{\loc},\qquad 1\leq p<\qw.
\]
Thus exact transport and equimeasurability also pass through
the angular approximation once this renormalization input
is included. The present construction obtains these
properties directly through the transported-profile
fixed-point problem.

\subsection{Stationary example with nonintegrable vorticity flux}
The constant angular datum $h=1$ gives a useful test of the nonlinear
formulation. By \eqref{eq:initialvelocity}, the stationary field
\begin{equation}\label{eq:radialexample}
 \omega(t,x)=|x|^{-\alpha},\qquad
 u(t,x)=\frac{|x|^{1-\alpha}}{2-\alpha}\,e_\theta
\end{equation}
has the canonical Biot--Savart velocity and is self-similar with the
same scaling. Its circular flow preserves the radial vorticity, and
the centripetal acceleration is the negative gradient of
$P(t,x)=-|x|^{2-2\alpha}/(2(\alpha-1)(2-\alpha)^2)$.
The velocity is locally $L^2$, and its circulation around radius
$\rho$ tends to zero like $\rho^{2-\alpha}$, excluding a point vortex.
For velocity, the stress flux across a circle is zero because the
velocity is tangential. The pressure boundary integral of a constant
vector test vanishes by angular integration. Subtracting that constant
leaves a term $\mathcal O(\rho^{4-2\alpha})$, which tends to zero. Thus the
identities extend across zero without a point force. Moreover
$|u\omega|=(2-\alpha)^{-1}|x|^{1-2\alpha}$, whose radial integral
near zero is proportional to $\int_0^1\rho^{2-2\alpha}\idiff\rho$.
It diverges for every $\alpha\geq3/2$, logarithmically at equality.
Thus the missing ordinary flux can actually occur within the datum
class of the theorem. The symmetrized formulation is indispensable.

\bibliographystyle{myalpha}
\bibliography{reference}
\end{document}